\DeclareMathOperator{\Norm}{Norm}
\DeclareMathOperator{\Rad}{Rad}
\DeclareMathOperator{\Gal}{Gal}
\DeclareMathOperator{\trace}{Tr}
\newcommand{\Q}{{\mathbb Q}}
\newcommand{\Z}{{\mathbb Z}}
\newcommand{\F}{{\mathbb F}}
\newcommand{\cO}{\mathcal{O}}
\def\mod#1{{\ifmmode\text{\rm\ (mod~$#1$)}
\else\discretionary{}{}{\hbox{ }}\rm(mod~$#1$)\fi}}
\begin{document}

\newtheorem{theorem}{Theorem}
\newtheorem{lemma}{Lemma}[section]
\newtheorem{proposition}[lemma]{Proposition}
\newtheorem{algorithm}[lemma]{Algorithm}
\newtheorem{corollary}[lemma]{Corollary}
\newtheorem*{conjecture}{Conjecture}

\theoremstyle{definition}
\newtheorem{definition}[theorem]{Definition}

\theoremstyle{remark}
\newtheorem{remark}[theorem]{Remark}

\newtheorem{acknowledgment}{Acknowledgement}

\title[]{Asymptotic Fermat's Last Theorem for a family of equations of signature $(2, 2n, n)$}

\author{Pedro-Jos\'{e} Cazorla Garc\'{i}a}

\address{Department of Mathematics, University of Manchester, Manchester, United Kingdom, M13 9PY}
\email{pedro-jose.cazorlagarcia@manchester.ac.uk}

\date{\today}

\begin{abstract}
In this paper, we study the integer solutions of a family of Fermat-type equations of signature $(2, 2n, n)$, $Cx^2 + q^ky^{2n} = z^n$. We provide an algorithmically testable set of conditions which, if satisfied, imply the existence of a constant $B_{C, q}$ such that if $n > B_{C,q}$, there are no solutions $(x, y, z, n)$ of the equation. Our methods use the modular method for Diophantine equations, along with level lowering and Galois theory.
\end{abstract}

\keywords{Exponential Diophantine equation, Fermat equations, Galois representations,
Frey--Hellegouarch curve, asymptotic Fermat's Last Theorem,
modularity, level lowering}
\subjclass[2010]{Primary 11D61, Secondary 11D41, 11F80, 11F11}

\maketitle

\section{Introduction}


\subsection{Historical background}
At the beginning of the 17th century, Fermat wrote in the margin of a copy of \emph{Arithmetica} that he had proved that the exponential Diophantine equation
\begin{equation}
    \label{eqn:flt}
x^n+y^n = z^n
\end{equation}
had no solutions $(x, y, z, n) \in \Z^4$ with $n > 2$ and $xyz \neq 0$. 
Fermat's alleged proof of this fact was never found and the resolution of \eqref{eqn:flt} became one of the biggest problems in the history of mathematics, known as \emph{Fermat's Last Theorem}.

In 1995, Wiles \cite{Wiles} proved the Modularity Theorem for semistable elliptic curves, which, along with Ribet's Level Lowering Theorem \cite{Ribet}, finished the proof of Fermat's Last Theorem more than three centuries after its initial statement. 

After Wiles's proof of Fermat's Last Theorem, several generalisations of \eqref{eqn:flt} have been studied. For example, the equation
\begin{equation}
    \label{eqn:fermattype}
Ax^p + By^q = Cz^r,
\end{equation}
where $A$, $B$ and $C$ are given integers, is called a \emph{Fermat-type equation} of signature $(p, q, r)$. After the proof of Fermat's Last Theorem, many researchers (see \cite{BenS, inertia, BMS, Kraus}, among many others) have used the modular methodology pioneered by Wiles in order to study Fermat-type equations over $\Q$.

More recently, this methodology has also been used to study \eqref{eqn:fermattype} over number fields $K$. In this setting, we are interested in solutions where $x$, $y$ and $z$ are elements of the ring of integers of $K$, which we will denote by $\cO_K$. 

One of the most relevant results on Fermat-type equations over number fields is due to Freitas and Siksek \cite{asymptotic}. They showed that for $5/6$ of real quadratic fields $K = \Q(\sqrt{d})$, ordered by the value of $d$, there exists a constant $B_K$, depending only on $K$, such that if $p > B_K$ is prime, the equation 
\[x^p + y^p = z^p\]
has no solutions $(x, y, z) \in \cO_K^3$ satisfying $xyz \neq 0$. This is called the \emph{asymptotic Fermat's Last Theorem} (AFLT) \emph{for $K$}. For a general totally real number field $K$, they give an algorithmically testable criterion which, if satisfied, implies the Asymptotic Fermat's Last Theorem for $K$. Shortly afterwards, Deconinck \cite{coefficients} proved an analogous result for the equation
\[Ax^p + By^p = Cz^p,\]
where $A, B, C \in \cO_K$ are fixed and odd. In this situation, the constant implied by AFLT depends on $A$, $B$ and $C$, as well as on the number field $K$.

Other researchers have extended this line of work to Fermat-type equations of signatures $(p, p, 2)$ and $(p, p, 3)$. For example, I\c{s}ik, Kara and Özman \cite{pp2, pp3} have studied the Diophantine equations
\begin{equation}
    \label{eqn:pp2}
x^p + y^p = z^2, \quad 2 \mid y,
\end{equation}
and
\begin{equation}
    \label{eqn:pp3}
x^p + y^p = z^3, \quad 3 \mid y,
\end{equation}
over totally real number fields $K$ with narrow class number $h_K^+ = 1$. For these fields, they show that there is a constant $B_K$ such that, if $p > B_K$, there are no solutions $(x, y, z, p) \in \cO_K^3 \times \Z$ to either \eqref{eqn:pp2} or \eqref{eqn:pp3}. Finally, this work was extended by Mocanu \cite{Mocanu}, who showed the same results under a weaker assumption.

\subsection{The main results}
In this paper, we adapt these techniques to approach a different problem. We note that all the previously mentioned literature considers solutions of \textbf{one} exponential Diophantine equation over infinitely many number fields. However, we shall consider solutions to a family of \textbf{infinitely many} exponential Diophantine equations of signature $(2, 2n, n)$ over the rationals. The family of Fermat-type equations that we will consider is the following:
\begin{equation}
\label{eqn:main} 
Cx^2 + q^ky^{2n} = z^n, \quad \gcd(Cx, qy, z) = 1, \quad 2 \mid z,
\end{equation}
where $C, q$ and $k$ are fixed positive integers, with $C$ squarefree and $q \ge 3$ a prime. 
This equation is relevant because, to the best of our knowledge, there are no results on AFLT for Fermat-type equations of signature $(p,p,2)$ unless $C = 1$ or $C = 2$ (see \cite{Ivorra, IvorraKraus, Kumar1, Kumar2} for some of the existing results if $C = 1$ or $C = 2$), even over $\Q$. Since solutions of \eqref{eqn:main} are also solutions of
\[Cx^2 + q^ky^{n} = z^n, \quad \gcd(Cx, qy, z) = 1,\]
studying \eqref{eqn:main} gives information about a Fermat-type equation of signature $(n, n, 2)$ with $C \neq 1, 2$. 

In addition, we note that \eqref{eqn:main} is a generalisation of the Lebesgue--Nagell equation, which has been widely studied in the literature (for example, see \cite{BennettSiksek2, BMS, secondpaper} or \cite{survey} for an exposition of the history of the Lebesgue--Nagell equation) to three variables $x, y$ and $z$. 

We now proceed to give the definition of AFLT for \eqref{eqn:main}, which is the following:

\begin{definition}(AFLT for \eqref{eqn:main})
    \label{def:aflt}
     We say that Asymptotic Fermat's Last Theorem (AFLT) holds for \eqref{eqn:main} if there is a constant $B_{C, q}$, depending only on $C$ and $q$, such that if $p$ is a prime number with $p > B_{C, q}$, then there are no solutions $(x, y, z, n) \in \Z^4$ to \eqref{eqn:main} with $n=p$.
\end{definition}

We remark that Definition \ref{def:aflt} is stronger than some of the definitions of AFLT used in previous literature, in the sense that $B_{C, q}$ does not depend on $k$. 

\begin{remark}
    If the constant in Definition \ref{def:aflt} exists, then there is a different constant $B'_{C,q}$ such that if $n$ is composite and $n > B'_{C,q}$, there are no solutions $(x, y, z, n) \in \Z^4$ of \eqref{eqn:main}. Indeed, suppose that 
    Definition \ref{def:aflt} holds. Let
    \[D = \{4, 6, 9\} \cup \{p \ge 5 \text{ prime} \mid p < B_{C, q}.\}\]
    Any solution $(x, y, z, n) \in \Z^4$ of \eqref{eqn:main} with $n$ composite necessarily has $m \mid n$ for some $m \in D$, so we write $n = mt$ for some $t \ge 1$. The existence of such a solution $(x, y, z, n)$ of \eqref{eqn:main} clearly means that $(x', y', z') = (x, y^t, z^t)$ is a solution of 
    \begin{equation}
        \label{eqn:composite}
        C(x')^2 + q^k (y')^{2m} = (z')^m.
    \end{equation}
    A specialisation of a result of Darmon and Granville (see \cite[Theorem 2]{DarmonGran}) yields that there are finitely many solutions of \eqref{eqn:composite}
    for any $m \in D$. Then we can define 
    \[\begin{split} m' = \max\{s \ge 1 \mid \text{there exists } m \in D,  (x, y, z) \in \Z^3 \\ \text{ such that } (x, y^s, z^s) \text{ is a solution of } \eqref{eqn:composite}\} \cup \{1\}.
    \end{split}\]
    By our previous discussion, the set above is finite and, therefore, $m' < \infty$. By definition of $m'$, it follows that any solution $(x, y, z, n) \in \Z^4$ with $n$ composite satisfies
    \[n < \max\{9, B_{C, q}\}\cdot m',
    \]
    and so it suffices to take $B'_{C, q} = \max\{9, B_{C, q}\}\cdot m'$. However, we emphasise that the constant $m'$ can only be made effective by explicitly resolving \eqref{eqn:main}, which is beyond the scope of this paper. 
\end{remark}

We can now present the main result of this paper, Theorem \ref{thm:main}, which provides a set of algorithmically testable conditions which, if satisfied, imply AFLT for \eqref{eqn:main}.

\begin{theorem}
    \label{thm:main}
    Let $C \ge 1$ be a squarefree integer, $q \ge 3$ a prime number and $k \ge 0$ an integer and consider the following Diophantine equation:
    \begin{equation*}
        Cx^2 + q^k y^{2n} = z^n, \quad \gcd(Cx, qy, z) = 1, \quad 2 \mid z.
    \end{equation*}
    Suppose that the Diophantine equation
    \begin{equation}
    \label{eqn:mainobstruction}
    Ct^2 + q^\gamma = 2^m    
    \end{equation}
    has no solutions $(t, \gamma, m) \in \Z^3$ with $m > 6$ and $\gamma \ge 0$ with $\gamma \equiv k \pmod 2$. Suppose furthermore that any of the following hypotheses hold:
    \begin{enumerate}[(a)]
        \item The exponent $k$ is odd.
        \item {The exponent $k$ is even,} and $-C$ is not a square modulo $q$.
        \item {The exponent $k$ is even, $q \not \equiv 7 \pmod 8$ and} there are no solutions $(t, \gamma, m) \in \Z^3$ to the following equation:
        \begin{equation}
            \label{eqn:obstruction1}
            Ct^2 + 1 = q^\gamma2^m, \quad m > 6, \quad \gamma \ge 0.
        \end{equation}

        \item {The exponent $k$ is even, $q \equiv 7 \pmod 8$}, \eqref{eqn:obstruction1} has no solutions and there are no solutions $(t, \gamma, m) \in \Z^3$ to 
        \begin{equation}
            \label{eqn:obstruction2}
            Ct^2 + 2^m = q^\gamma, \quad m >6 \text{ even}, \quad \gamma > 0 \text{ odd}.
        \end{equation}
    \end{enumerate}
    Then AFLT holds for \eqref{eqn:main} and the constant $B_{C, q}$ can be explicitly computed.
\end{theorem}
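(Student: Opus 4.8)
The plan is to run the modular method for the Frey--Hellegouarch curve attached to a putative solution $(x,y,z,n)$ of \eqref{eqn:main} with $n = p$ a large prime. First I would rewrite \eqref{eqn:main} as $Cx^2 + q^k y^{2p} = z^p$ and view it as a ternary equation of signature $(p,p,2)$ in the variables $Cx^2$, $q^k y^{2p}$, $z^p$; since $2 \mid z$ and $\gcd(Cx, qy, z) = 1$, one obtains a Frey curve $E = E_{x,y,z}$ of the standard shape for signature $(p,p,2)$, e.g.\ (after absorbing square factors) a curve with $c_4$ and $\Delta$ that are polynomials in $z^p$ and $\sqrt{C}\,x$ with all primes dividing the conductor lying in $\{2, q\} \cup \{\ell : \ell \mid C\}$. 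The mod-$p$ Galois representation $\bar\rho_{E,p}$ is irreducible for $p$ large (by a uniform boundedness / Mazur-type argument, which the earlier sections presumably establish), and modularity of $E$ over $\Q$ together with Ribet's level-lowering yields a newform $f$ of weight $2$ and level $N$, where $N$ is a fixed integer supported on $2$, $q$ and the primes dividing $C$ — crucially independent of $k$ and of the solution. There are finitely many such $f$.

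Next I would eliminate each newform $f$ by comparing Hecke eigenvalues with traces of Frobenius of $\bar\rho_{E,p}$ at a suitably chosen auxiliary prime, in the usual way, obtaining for each $f$ a finite set of primes $p$ that survive — unless $f$ has an associated elliptic curve (or abelian variety) whose mod-$p$ representation genuinely matches $\bar\rho_{E,p}$ for infinitely many $p$. In that surviving case the standard dichotomy forces $f$ to correspond to an elliptic curve $E'/\Q$ of the same conductor $N$ with a nontrivial $2$-torsion point, and the isomorphism $\bar\rho_{E,p} \cong \bar\rho_{E',p}$ for all large $p$ forces, via comparison of the two Frey curves' $j$-invariants and discriminants at the primes $2$ and $q$, a relation of the form $Ct^2 + q^\gamma = 2^m$ (up to units and the interchange of roles of $q$ and $2$). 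This is exactly where the hypothesis that \eqref{eqn:mainobstruction} has no solution with $m > 6$, $\gamma \ge 0$, $\gamma \equiv k \pmod 2$ is used: it rules out all such $E'$, so no newform survives and $B_{C,q}$ is the maximum of the finitely many surviving primes.

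The parity condition $\gamma \equiv k \pmod 2$ and the case split (a)--(d) arise from the arithmetic of the equation modulo $q$ and modulo $8$. When $k$ is odd (case (a)), the $q$-adic valuation of the relevant quantity is odd and one lands directly on \eqref{eqn:mainobstruction} with $\gamma$ odd; when $k$ is even, one must further analyse whether $q$ divides $z$ or not. If $-C$ is not a square mod $q$ (case (b)) the equation $Cx^2 + q^k y^{2p} \equiv 0 \pmod q$ is impossible unless $q \mid x$ and $q \mid y$, contradicting coprimality, so no extra obstruction appears. Otherwise one gets a second Frey curve / second descent possibility, producing the auxiliary equations \eqref{eqn:obstruction1} and, when $q \equiv 7 \pmod 8$ so that $2$ is a norm from $\Q(\sqrt{-q})$-type considerations, \eqref{eqn:obstruction2}; the split on $q \bmod 8$ tracks whether $2$ is a square mod $q$, which governs which factorisation of $z^p$ over $\Z[\sqrt{C}]$ or over the relevant quadratic order is admissible. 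I expect the main obstacle to be precisely the \emph{bookkeeping of these local conditions at $2$ and $q$}: pinning down exactly which power of $2$ and which parity of the $q$-exponent occur in the surviving Frey curve $E'$, so that the obstruction equations \eqref{eqn:mainobstruction}--\eqref{eqn:obstruction2} are tight enough to be stated as clean necessary conditions, rather than the level-lowering and elimination steps themselves, which are by now standard.
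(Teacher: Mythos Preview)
Your outline captures the standard modular-method scaffolding (Frey curve, level lowering to a fixed level $N' \mid 2C^2q$, reduction to rational newforms corresponding to elliptic curves $E'$ with a rational $2$-torsion point), and that part matches the paper. But there is a genuine gap at the step where you pass from ``$E'$ has conductor $N'$ and a $2$-torsion point'' to an equation of the shape $Ct^2 + q^\gamma = 2^m$. Writing $E': Y^2 = X(X^2+AX+B)$, the conductor condition only tells you that $B$ and $A^2-4B$ are supported on $\{2,q\}\cup\{r:r\mid C\}$, and the image-of-inertia/$j$-invariant comparison at primes $r\mid C$ only equalises $\nu_r(B)$ and $\nu_r(A^2-4B)$. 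That still leaves infinitely many admissible pairs $(B,A^2-4B)$, and no relation of the required form drops out. The paper closes this gap with an argument you do not mention: one shows that the Frey curve $F$ (and every curve $2$-power-isogenous to it) has no rational subgroup of order $4$, and then runs a Galois/Chebotarev sieve comparing whether $4\mid\#F(\F_\ell)$ and $4\mid\#E'(\F_\ell)$. The upshot is that, up to rational squares, either $B$ or $A^2-4B$ must lie in the square class of $-Cq^s$ where $s\equiv k\pmod 2$. It is \emph{this} square-class constraint that forces the expression $A^2 = (A^2-4B)+4B$ to become, after extracting the common factor $Cw^2$, one of the obstruction equations \eqref{eqn:mainobstruction}, \eqref{eqn:obstruction1}, \eqref{eqn:obstruction2}.

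Your account of the case split (a)--(d) is also off. The split does not come from reducing the original equation \eqref{eqn:main} modulo $q$ or from a ``second Frey curve/descent''; it comes from analysing which of the finitely many sign/parity patterns for $(B,A^2-4B)$ survive the square-class constraint above (Propositions \ref{prop:finalcharacterisationkodd} and \ref{prop:finalcharacterisationkeven}), and then reducing the \emph{obstruction equations} themselves modulo $q$ and modulo $8$. For instance, hypothesis (b) works because if $-C$ is a non-square mod $q$ then $Ct^2+1\equiv 0\pmod q$ and $Ct^2+2^m\equiv 0\pmod q$ (with $m$ even) are both impossible, killing alternatives \eqref{eqn:obstruction1} with $\gamma>0$ and \eqref{eqn:obstruction2}; it has nothing to do with whether $q\mid z$ in the original equation.
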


We note that, in order to prove Theorem \ref{thm:main}, it suffices to do so under the assumption that $0 \le k < 2n$. Indeed, let us write $k = 2nk_1 + k_2$, with $k_1 \ge 0$ and $0 \le k_2 < 2n$. Then any solution $(x_0,y_0, z_0,n_0)$ of \eqref{eqn:main} gives rise to a solution $(x_0,y_0q^{k_1}, z_0, n_0)$ of 
\[
    Cx^2 + q^{k_2}y^{2n} = z^n,
\]
and, since $k \equiv k_2 \pmod{2}$, the conditions in Theorem \ref{thm:main} are well defined if we replace $k$ by $k_2$. For this reason, we shall assume that $0 \le k < 2n$ for the remainder of the paper.

In addition, we emphasise that the determination of whether any of the hy\-po\-the\-ses in Theorem \ref{thm:main} are satisfied can be done in a computationally effective manner. For this purpose, we provide the reader with \texttt{Magma} code in the GitHub repository \href{https://github.com/PJCazorla/Asymptotic-Fermat-s-Last-Theorem-for-a-family-of-equations-of-signature--n--2n--2-/tree/main}{https://shorturl.at/hoxW8}. We will explain the computations in Section \ref{Sec:computation}, allowing us to prove the following result.

\begin{theorem}
    \label{thm:computations}
    Let $1 \le C \le 70$ be a squarefree integer and $3 \le q < 100$ be a prime number. By reducing \eqref{eqn:main} modulo $8$, we see that $Cq^k \equiv 7 \pmod{8}$. Then AFLT holds for 268 out of the 330 pairs in this range. In addition, Table \ref{tab:computations} contains the number of pairs satisfying the conditions in Theorem \ref{thm:main}, as well as the total number of pairs for each value of $k\pmod{2}$.

    \begin{table}[!ht]
        \centering
        \begin{tabular}{|ccc|}
            \hline 
             $k \pmod{2}$ & \#Pairs $(C, q)$ & \# Pairs satisfying the conditions in Theorem \ref{thm:main}  \\
             \hline \hline 
             0 & 158 & 131 \\
             1 & 172 & 137 \\
             \hline \hline 
             \textbf{TOTAL} & 330 & 268 \\
             \hline 
        \end{tabular}
        \caption{Number of pairs satisfying the conditions in Theorem \ref{thm:main}.}
        \label{tab:computations}
    \end{table}
\end{theorem}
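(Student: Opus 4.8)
The plan is to prove Theorem \ref{thm:computations} by running, for every relevant pair $(C,q)$, the decision procedure provided by Theorem \ref{thm:main}, and recording the outcome in Table \ref{tab:computations}. The first step is the reduction modulo $8$: since $2 \mid z$ and the exponent is large, $8 \mid z^n$, and a short case analysis using $\gcd(Cx,qy,z)=1$ shows that $C$, $x$ and $y$ are all odd, so $x^2 \equiv y^{2n} \equiv 1 \pmod{8}$ and therefore $C + q^k \equiv 0 \pmod{8}$, that is, $Cq^k \equiv 7 \pmod{8}$. As explained after the statement of Theorem \ref{thm:main}, only $k \bmod 2$ enters its hypotheses, so this congruence fixes the residue class of $C$ modulo $8$ once $q$ and the parity of $k$ are chosen; after also discarding the handful of pairs for which \eqref{eqn:main} is trivially unsolvable (those with $q \mid C$), one is left with exactly the $330$ pairs of Table \ref{tab:computations} --- $158$ for which $k$ must be even and $172$ for which $k$ must be odd.

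Next, for each such pair I would compute $q \bmod 8$ and the Legendre symbol $\left(\frac{-C}{q}\right)$ --- both immediate --- in order to select which of the mutually exhaustive cases (a)--(d) of Theorem \ref{thm:main} is in force: case (a) if $k$ is odd, and, if $k$ is even, case (b) when $-C$ is a non-square modulo $q$, case (c) when $-C$ is a square modulo $q$ with $q \not\equiv 7 \pmod{8}$, and case (d) when $-C$ is a square modulo $q$ with $q \equiv 7 \pmod{8}$. In each case the hypothesis reduces to showing that a short list of exponential equations of Lebesgue--Nagell type has no solution with $m > 6$: equation \eqref{eqn:mainobstruction} always (subject to $\gamma \equiv k \pmod{2}$), together with \eqref{eqn:obstruction1} in cases (c) and (d) and \eqref{eqn:obstruction2} in case (d).

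To settle these auxiliary equations, for each of them and each admissible parity of $\gamma$ I would first apply the elementary congruence sieve modulo $8$, modulo $q$, and modulo a few auxiliary primes, and then pass any surviving families to the resolution of Lebesgue--Nagell and Thue--Mahler equations implemented in the \texttt{Magma} code described in Section \ref{Sec:computation}. Carrying this out over all squarefree $1 \le C \le 70$ and all primes $3 \le q < 100$, one finds that the hypotheses of Theorem \ref{thm:main} hold for $131$ of the $158$ pairs with $k$ even and for $137$ of the $172$ with $k$ odd, hence for $268$ pairs in total; for each of these, Theorem \ref{thm:main} yields AFLT for \eqref{eqn:main} with an explicitly computable constant $B_{C,q}$, which is exactly the claim of Theorem \ref{thm:computations}.

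I expect the main obstacle to be this last step for the $62$ pairs that are \emph{not} verified: there the relevant equation either genuinely has a solution with $m > 6$ --- a sporadic identity $Ct^2 + q^\gamma = 2^m$, say --- or its Thue--Mahler reduction produces an instance outside the practical reach of the solver, and care is needed to count a pair only when a hypothesis is \emph{provably} satisfied rather than merely left unrefuted. A secondary, purely bookkeeping point is to keep the dichotomies on $q \bmod 8$ and on $\left(\frac{-C}{q}\right)$ consistent across the whole range, and to use the reduction $0 \le k < 2n$ so that only $k \bmod 2$, and not the size of $k$, affects the computation.
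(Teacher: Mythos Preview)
Your proposal is correct and follows essentially the same approach as the paper: derive the congruence $Cq^k\equiv 7\pmod 8$ by reducing \eqref{eqn:main} modulo $8$, enumerate the admissible pairs $(C,q)$ (your observation that one must also discard the pairs with $q\mid C$ is exactly what produces the counts $158$ and $172$), and then, for each pair, test the hypotheses of Theorem~\ref{thm:main} by machine. The only noteworthy discrepancy is in how you describe the resolution of the auxiliary equations: you speak of congruence sieves followed by Lebesgue--Nagell and Thue--Mahler solvers, whereas the paper's Section~\ref{Sec:computation} instead reduces \eqref{eqn:mainobstruction} and \eqref{eqn:obstruction2} to finding $\{q\}$- or $\{2\}$-integral points on a finite list of Mordell curves (or reads the answer from Cremona's tables when the conductor is small), and handles \eqref{eqn:obstruction1} as a Ramanujan--Nagell equation; but since you explicitly defer to the code of Section~\ref{Sec:computation}, this is a difference of phrasing rather than of method.
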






The structure of the paper is as follows. In Section \ref{Sec:preliminary}, we present the modular method for Diophantine equations and characterise under what conditions it fails to prove AFLT for \eqref{eqn:main}. In Sections \ref{Sec:inertia} and \ref{Sec:galois}, we will use ``image of inertia'' arguments and Galois theory respectively to build upon this characterisation. In Section \ref{Sec:possibilities}, we put the previous results together to show that the failure of AFLT implies the existence of an elliptic curve $E$ of a particular form. In Section \ref{Sec:theorem1}, we finish the proof of Theorem \ref{thm:main} by relating the curve $E$ to the existence of solutions to certain Diophantine equations. Finally, on Section \ref{Sec:computation}, we show that checking the conditions in Theorem \ref{thm:main} is a computationally effective process and prove Theorem \ref{thm:computations}.

\noindent \textbf{Acknowledgements} The author would like to thank Gareth Jones, Diana Mocanu and Lucas Villagra-Torcomián for their comments on a draft version of the paper and for useful conversations.

\section{The Frey--Hellegouarch curve and the modular method}
\label{Sec:preliminary}
In this section, we present the Frey--Hellegouarch that we shall use to achieve a bound on the exponent $n$. An excellent expository article on the modular method and its applications can be found in \cite{Siksek}. 

We highlight that, due to the fact that $z$ is even, there is only one Frey--Hellegouarch curve to consider, allowing for a uniform treatment of all cases. If $z$ were odd, the number of cases to consider would grow significantly and, consequently, we would not be able to get a result like Theorem \ref{thm:main}. We refer the reader to Remark \ref{rmk:toughluck} for a more detailed discussion on why this is the case.

We suppose that there exists a solution $(x, y, z, p)$ to \eqref{eqn:main} with $z$ even and $n = p \ge 7$ a prime number. Following Bennett and Skinner \cite{BenS}, we can associate the following elliptic curve to the solution:
\begin{equation}
        \label{eqn:frey}
        F = F(x, z, p): Y^2 + XY = X^3 + \frac{Cx-1}{4}X^2 + \frac{Cz^p}{64}X.
\end{equation}
We shall call $F$ the \emph{Frey--Hellegouarch curve associated to $(x, y, z, p)$}. By \cite[Lemma 2.1]{BenS}, the minimal dis\-cri\-mi\-nant of $F$ is given by
\begin{equation}
    \label{eqn:freyminimaldiscriminant}
    \Delta_F = -2^{-12}C^3q^k(yz)^{2p},
\end{equation}
and conductor given by
\begin{equation}
    \label{eqn:conductor}
N = \begin{cases}
    2C^2q\Rad_{2,q}(yz), & \text{if } k \neq 0, \\
    2C^2\Rad_{2}(yz), & \text{if } k = 0.
    \end{cases}
\end{equation}
where $\Rad_{2,q}(yz)$ denotes the product of all prime numbers dividing $yz$ except $2$ and $q$, and similarly for $\Rad_{2}(yz)$. By the Modularity Theorem \cite{Wiles}, the curve $F$ corresponds to a rational modular form of weight $2$ and level $N$. However, we remark that the level $N$ depends on our solutions and, therefore, is not explicit.

In order to be able to work with newforms of an explicit level, we will need to combine the Modularity Theorem with Ribet's Level Lowering Theorem \cite{Ribet}. We shall do so by applying \cite[Theorem 13]{Siksek} (which is a combination of \cite[Lemmas 3.2 and 3.3]{BenS}). By this result, it follows that either $yz = \pm 1$, or there exists a newform $f$ of weight $2$, trivial Nebentypus character and level given by
\begin{equation}
    \label{eqn:Np}
N' = \begin{cases}
        2C^2q & \text{if } k \neq 0,p, \\
        2C^2 & \text{if } k = 0,p,
      \end{cases}
\end{equation}
such that 
\begin{equation}
\label{eqn:galoisreps}
\overline{\rho}_p(F) \cong \overline{\rho}_p(f),
\end{equation}
where $\overline{\rho}_p$ denotes the mod-$p$ Galois representations associated to $F$ and $f$ respectively. We note that, since $z$ is even, $yz \neq \pm 1$. Consequently, \eqref{eqn:galoisreps} holds.

Given a prime number $\ell$, we define $a_\ell(F) := \ell+1-\#F(\mathbb{F}_\ell)$. Similarly, we let $c_\ell$ be the $\ell-$th coefficient in the Fourier cusp expansion of $f$, we let $K_f$ be the number field generated by all Fourier coefficients of $f$, and we let $\cO_{K_f}$ be its ring of integers. Then, by \cite[Propositions 5.1 and 5.2]{Siksek}, \eqref{eqn:galoisreps} is equivalent to
\begin{equation}
    \label{eqn:congruenceconditions}
    \begin{cases}
        a_\ell(F) \equiv c_\ell \text{ (mod } \mathfrak{p}\text{)} & \text{ if } \ell \neq p, \quad \ell \nmid N, \\
        c_\ell \equiv \pm(\ell+1) \text{ (mod } \mathfrak{p}\text{)} & \text{ if } \ell \neq p, \quad \ell \nmid N', \quad  \ell \mid N, \\
    \end{cases}
\end{equation}
where $\mathfrak{p}$ is some prime ideal of $\cO_{K_f}$ above $p$. In addition, if $f$ is a rational newform, the condition $\ell \neq p$ can be removed in both cases. The following proposition, which is \cite[Proposition 9.1]{Siksek}, allows us to bound $p$ in some instances by exploiting \eqref{eqn:congruenceconditions}.

\begin{proposition}
    \label{prop:dirbound}
    Suppose that $(x,y,z, p) \in \Z^4$ is a solution to \eqref{eqn:main} with $n = p \ge 7$ prime. Let $f$ be a newform of weight $2$ and level $N'$ as in \eqref{eqn:Np}, with field of coefficients $K_f$ and such that $\overline{\rho}_p(F) \cong \overline{\rho}_p(f)$. Then, for any prime number $\ell \nmid N'$, we define
    \[
        B'_\ell(f) = \Norm_{K_f/\mathbb{Q}}\left((\ell+1)^2-c_\ell^2 \right) \prod_{\substack{|a| < 2\sqrt{\ell} \\ 2 \mid a}} \Norm_{K_f/\mathbb{Q}}(a-c_\ell),
    \]
    and
    \[
        B_\ell(f) = \begin{cases}
            B'_\ell(f) & \text{ if } f \text{ is rational.} \\
            \ell B'_\ell(f) & \text{ otherwise.}
        \end{cases}
    \]
    Then $p \mid B_\ell(f)$.
\end{proposition}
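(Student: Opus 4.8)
The plan is to feed the mod‑$p$ isomorphism $\overline{\rho}_p(F) \cong \overline{\rho}_p(f)$ into the trace congruences \eqref{eqn:congruenceconditions} and then convert the resulting divisibility in $\cO_{K_f}$ into a divisibility of a rational integer by taking norms. Fix a prime $\ell \nmid N'$; since $2 \mid N'$ by \eqref{eqn:Np}, such an $\ell$ is automatically odd. I would split the argument according to whether $\ell$ divides the (non‑explicit) conductor $N$ of $F$ given in \eqref{eqn:conductor}.

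First, suppose $\ell \nmid N$. Then $F$ has good reduction at $\ell$, so $a_\ell(F) \in \Z$ and the Hasse bound gives $|a_\ell(F)| < 2\sqrt{\ell}$. Moreover the Weierstrass model \eqref{eqn:frey} has $a_3 = 0$, so $(0,0)$ is a $\Q$‑rational point of order $2$ on $F$; as $\ell$ is odd and $F/\Q_\ell$ has good reduction, reduction is injective on prime‑to‑$\ell$ torsion, hence $(0,0)$ reduces to a point of exact order $2$ in $F(\F_\ell)$ and $\#F(\F_\ell)$ is even. Therefore $a_\ell(F) = \ell + 1 - \#F(\F_\ell)$ is an \emph{even} integer in the Hasse range, i.e. one of the $a$ occurring in the product defining $B'_\ell(f)$. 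The first line of \eqref{eqn:congruenceconditions} gives $a_\ell(F) \equiv c_\ell \pmod{\mathfrak{p}}$, so $\mathfrak{p} \mid \bigl(a_\ell(F) - c_\ell\bigr)$ in $\cO_{K_f}$ and hence $p \mid \Norm_{K_f/\Q}\bigl(a_\ell(F) - c_\ell\bigr)$, which divides $B'_\ell(f)$. Next, suppose $\ell \mid N$; since $\ell \nmid N'$ this forces $\ell \neq 2, q$ and $\ell \nmid C$, so $\ell \mid yz$. Then the second line of \eqref{eqn:congruenceconditions} gives $c_\ell \equiv \pm(\ell+1) \pmod{\mathfrak{p}}$, whence $\mathfrak{p} \mid \bigl((\ell+1)^2 - c_\ell^2\bigr)$ and $p \mid \Norm_{K_f/\Q}\bigl((\ell+1)^2 - c_\ell^2\bigr)$, which again divides $B'_\ell(f)$.

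In either case $p \mid B'_\ell(f)$, provided the relevant line of \eqref{eqn:congruenceconditions} applies. This is unconditional when $f$ is rational, so there $p \mid B'_\ell(f) = B_\ell(f)$ and we are done. When $f$ is not rational the congruences require $\ell \neq p$, which is precisely why $B_\ell(f)$ carries the extra factor $\ell$: if $\ell \neq p$ we get $p \mid B'_\ell(f) \mid B_\ell(f)$ as above, while if $\ell = p$ then $p = \ell \mid \ell B'_\ell(f) = B_\ell(f)$ trivially. This exhausts all cases. The proof is essentially bookkeeping with \eqref{eqn:congruenceconditions} and the definitions of $B'_\ell$ and $B_\ell$; the only step needing a moment's care is the parity of $a_\ell(F)$, which follows from injectivity of reduction on $2$‑torsion at the odd good prime $\ell$, and I do not anticipate any genuine obstacle.
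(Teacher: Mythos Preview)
Your argument is correct and is precisely the standard proof of this fact; the paper does not supply its own proof but simply invokes \cite[Proposition 9.1]{Siksek}, whose proof proceeds exactly as you do, by splitting on whether $\ell$ divides $N$, applying the appropriate line of \eqref{eqn:congruenceconditions}, and taking norms down to $\Q$. The one point you flagged as needing care---the parity of $a_\ell(F)$ via the $2$-torsion point $(0,0)$ and injectivity of reduction at odd good primes---is handled correctly.
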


\begin{remark}
    \label{rmk:Bl0}
We remark that, as long as $B_\ell(f) \neq 0$ for all newforms of weight $2$ and level $N'$, we will be able to explicitly find a constant $B_{C, q}$ such that any solutions $(x, y, z, p)$ of \eqref{eqn:main} with $n = p$ prime necessarily satisfy that $p < B_{C, q}$, thereby proving AFLT for \eqref{eqn:main}. Consequently, our aim is to characterise those newforms $f$ for which $B_\ell(f) = 0$.

As stated in the remarks following \cite[Proposition 9.1]{Siksek}, if $B_\ell(f) = 0$, $f$ is necessarily a rational newform and therefore corresponds to an elliptic curve $E$ via the Modularity Theorem. In this instance, we shall write $\overline{\rho}_p(F) \cong \overline{\rho}_p(E)$ to mean $\overline{\rho}_p(F) \cong \overline{\rho}_p(f)$. We also note that in this case $c_\ell = a_\ell(E) := \ell + 1 - \#E(\F_\ell)$.

In addition, if $B_\ell(f) = 0$, we know that $E$ is isogenous to a curve $E'$ with a $\Q-$rational point of order $2$. In this case, it is still true that 
\[\overline{\rho}_p(F) \cong \overline{\rho}_p(E').\]
This fact allows us to prove the following lemma.
\end{remark}

\begin{lemma}
\label{lemma:minimalmodel}
Suppose that AFLT does not hold for \eqref{eqn:main}.
Then there exists an elliptic curve $E$ of conductor $N'$ with a model of the form
\begin{equation}
    \label{eqn:EC}
E: Y^2 = X(X^2+AX+B),
\end{equation}
for certain integers $A, B$ and satisfying
\[\overline{\rho}_p(F) \cong \overline{\rho}_p(E).\]
In addition, the invariants of the minimal model of $E$ are given by
\[c_4 = A^2-3B,\]
\[c_6 = \frac{A(9B-2A^2)}{2},\]
and
\begin{equation}
    \label{eqn:minimaldiscriminant}
\Delta = \frac{B^2(A^2-4B)}{2^8}.
\end{equation}
\end{lemma}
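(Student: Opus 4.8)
The plan is to start from the information Remark~\ref{rmk:Bl0} gives us: if AFLT fails, then for every prime $\ell \nmid N'$ there is a newform $f$ of weight~$2$ and level $N'$ with $B_\ell(f) = 0$. Since there are only finitely many such newforms but infinitely many $\ell$, a pigeonhole argument produces a single newform $f$ with $B_\ell(f) = 0$ for infinitely many $\ell$; by the remark this forces $f$ to be rational, hence attached to an elliptic curve, and moreover attached (after isogeny) to a curve $E'$ possessing a $\Q$-rational $2$-torsion point, still satisfying $\overline{\rho}_p(F) \cong \overline{\rho}_p(E')$. Rename $E'$ to $E$. This gives the existence of an $E$ of conductor $N'$ with $\overline{\rho}_p(F) \cong \overline{\rho}_p(E)$ and a rational $2$-torsion point.

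The next step is to put $E$ into the desired shape. A curve with a rational $2$-torsion point can be written as $Y^2 = X(X^2 + aX + b)$ over $\Q$; clearing denominators by the substitution $X \mapsto X/\lambda^2$, $Y \mapsto Y/\lambda^3$ lets us take $a = A$, $b = B \in \Z$, giving the model \eqref{eqn:EC}. One computes the standard invariants of this model: $b_2 = 4A$, $b_4 = 2B$, $b_6 = 0$, $b_8 = -B^2$, whence $c_4 = b_2^2 - 24 b_4 = 16A^2 - 48B = 16(A^2 - 3B)$, $c_6 = -b_2^3 + 36 b_2 b_4 - 216 b_6 = -64A^3 + 288AB = -32 A(2A^2 - 9B) = 32\cdot \frac{A(9B - 2A^2)}{2} \cdot 2$ (i.e. $c_6 = 32 A(9B-2A^2)$... let me be careful), and discriminant $\Delta = b_2^2 b_8 - 8 b_4^3 - 27 b_6^2 + 9 b_2 b_4 b_6 = 16 B^2(A^2 - 4B)$. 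These are the invariants of the given (possibly non-minimal) model; the claimed formulas $c_4 = A^2 - 3B$, $c_6 = A(9B-2A^2)/2$, $\Delta = B^2(A^2-4B)/2^8$ are those of the \emph{minimal} model, so the content of this step is that one may rescale (i.e. twist by a suitable $\lambda$) so that the $2$-adic valuations drop by exactly the stated amounts.

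The crux is therefore the minimality claim, and here is where the hypothesis $2 \mid z$ (hence $2 \mid N'$, and $N'$ has prescribed $2$-adic valuation from \eqref{eqn:Np}) is essential. The argument is: since $\overline{\rho}_p(F) \cong \overline{\rho}_p(E)$ and $F$ has conductor supported at the primes dividing $N$ with controlled behaviour at $2$, $q$ (it has multiplicative reduction at odd primes of bad reduction and a known Kodaira type at~$2$), Ribet/level-lowering forces $E$ to have conductor exactly $N'$; in particular $\operatorname{ord}_2(N') = 1$, so $E$ has multiplicative reduction at $2$, which pins down $\operatorname{ord}_2(c_4) = 0$ and $\operatorname{ord}_2(\Delta_{\min}) = -\operatorname{ord}_2(j) \ge 1$ via Tate's algorithm. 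Comparing with $c_4 = 16(A^2 - 3B)$ and $\Delta = 16B^2(A^2-4B)$ for the model \eqref{eqn:EC}, the factor $16 = 2^4$ in $c_4$ together with multiplicative reduction forces a $2$-adic scaling by $\lambda = 2$, dividing $c_4$ by $\lambda^4 = 16$, $c_6$ by $\lambda^6 = 64$, and $\Delta$ by $\lambda^{12} = 2^{12}$ — exactly yielding $c_4 = A^2 - 3B$, $c_6 = A(9B - 2A^2)/2$, $\Delta = B^2(A^2 - 4B)/2^8$ (with $2 \mid A$ needed for $c_6$ to be an integer, which follows because otherwise $v_2(c_4)=0$ would be incompatible with... — in fact $A$ odd would make $c_4 \equiv 1 \pmod 2$ after scaling, contradicting nothing, so one checks separately that the minimal model at $2$ forces $2 \mid A$, equivalently $A \equiv 0 \pmod 2$, from the Kodaira type).

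The main obstacle I anticipate is precisely this last bookkeeping at the prime $2$: verifying that no further (odd-prime) scaling is possible — i.e. that \eqref{eqn:EC} after dividing out $2^4$ from $c_4$ is genuinely minimal everywhere — which uses that $\gcd(Cx, qy, z) = 1$ and the shape of $\Delta_F$ in \eqref{eqn:freyminimaldiscriminant} to rule out common square factors, and that the substitution producing integral $A, B$ can be chosen to respect minimality at odd primes. I would handle this by invoking \cite[Lemma 2.1]{BenS}-style local analysis together with Tate's algorithm at $2$, and by noting that $E$ being isogenous to a semistable-at-$2$ curve of conductor divisible by exactly $2^1$ leaves no room for a non-trivial $2$-adic twist beyond the single $\lambda = 2$ already extracted.
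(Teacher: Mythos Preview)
Your overall plan matches the paper's: extract $E$ from Remark~\ref{rmk:Bl0} as the elliptic curve attached to a rational newform with a $2$-torsion point, put it in the form $Y^2 = X(X^2 + AX + B)$ with integral $A, B$, compute the invariants, and then establish minimality. Your handling of the prime $2$ is also essentially the paper's: multiplicative reduction at $2$ (since $2 \,\|\, N'$) forces $v_2(c_4^{\min}) = 0$, so one scaling by $u = 2$ removes the visible $2^4$ in $c_4' = 16(A'^2 - 3B')$, yielding the stated $c_4, c_6, \Delta$; if the result is still not $2$-minimal one has $4 \mid A'$ and $16 \mid B'$ and iterates.

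However, your treatment of odd primes contains a genuine confusion. The curve $E$ is \emph{not} the Frey curve $F$; it is the curve attached to the newform of level $N'$, linked to $F$ only through the conductor and the mod-$p$ representation. Invoking $\gcd(Cx, qy, z) = 1$ or the discriminant $\Delta_F$ from \eqref{eqn:freyminimaldiscriminant} therefore tells you nothing about the minimal model of $E$, and a Bennett--Skinner-style local analysis of $F$ is beside the point here. The paper's argument is instead purely intrinsic to the model $Y^2 = X(X^2 + A'X + B')$: if it is non-minimal at an odd prime $\ell$, then $\ell^4 \mid {A'}^2 - 3B'$ and $\ell^{12} \mid {B'}^2({A'}^2 - 4B')$, and a short pigeonhole (either $\ell^7 \mid {B'}^2$ or $\ell^6 \mid {A'}^2 - 4B'$) forces $\ell^2 \mid A'$ and $\ell^4 \mid B'$. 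One then replaces $(A', B')$ by $(A'/\ell^2, B'/\ell^4)$, obtaining an isomorphic curve still of the shape \eqref{eqn:EC}, and iterates. The key fact you are missing is that odd-prime minimalisation can be carried out \emph{without leaving the family} $Y^2 = X(X^2 + AX + B)$; a black-box appeal to Tate's algorithm would not hand you this. (Incidentally, your worry about needing $2 \mid A$ for $c_6 \in \Z$ is unfounded: $v_2(c_4) = 0$ makes $A^2 - 3B$ odd, so either $A$ or $B$ is even, and in either case $A(9B - 2A^2)$ is even.)
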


\begin{proof}
    By our discussion in Remark \ref{rmk:Bl0}, Proposition \ref{prop:dirbound} will succeed in bounding $p$ unless there is an elliptic curve $E$ of conductor $N'$ with a point of order $2$ satisfying $\overline{\rho}_p(F) \cong \overline{\rho}_p(E)$. Up to isomorphism, we can assume that $E$ has the following model:
    \begin{equation}
    \label{eqn:initialcurve}
    E: Y^2 = X(X^2+A'X+B'),
\end{equation}
for certain $A', B' \in \mathbb{Z}$. By directly applying the formulas in \cite[Chapter 3]{Silverman}, we find that this model has invariants given by
\[c'_4 = 16({A'}^2-3B'),\]
\[c'_6 = 2^5A'(9B'-2{A'}^2),\]
and
\[\Delta' = 2^4{B'}^2({A'}^2-4B').\]
Suppose that $\ell \neq 2$ is a prime for which the model \eqref{eqn:initialcurve} is not minimal. Therefore, it follows that $\ell^{12} \mid \Delta'$ and $\ell^4 \mid c'_4$. Consequently, $\ell^4 \mid {A'}^2-3B'$ and either
    \[\ell^7 \mid {B'}^2 \quad \text{or} \quad \ell^6 \mid ({A'}^2-4B'),\]
    by the pigeonhole principle. In both cases, we can see that $\ell^4 \mid B'$ and $\ell^2 \mid A'$. Then we can replace $(A', B')$ by 
    \[(A, B) = \left(\frac{A'}{\ell^2}, \frac{B'}{\ell^4}\right)\]
    in \eqref{eqn:initialcurve} and obtain an isomorphic model. After finitely many iterations, this procedure
    will yield a model which is minimal at $\ell$.
    
    Finally, let us consider the case $\ell = 2$. Since $2 \mid c'_4$, $2 \mid \Delta',$ and 2 is a prime of multiplicative reduction for $E$, it follows that \eqref{eqn:initialcurve} cannot be a minimal model at $2$. Consequently, there is another model of $E$ with invariants given by
    \[c''_4 = c'_4/2^4 = {A'}^2-3B',\]
    \[c''_6 = c'_6/2^6 = \frac{A'(9B'-2{A'}^2)}{2},\]
    \[\Delta'' = \Delta'/2^{12} = \frac{{B'}^2({A'}^2-4B')}{2^8}.\]
    If this model is minimal at $2$, we may take $A = A'$ and $B = B'$ and finish the proof. Otherwise, we have that $2^6 \mid c''_6$ and $2^4 \mid c''_4$, and by exploting a similar argument to the case where $\ell \neq 2$, we may see that $2^4 \mid B'$ and $2^2 \mid A'$ and iterately replace $(A', B')$ by $(A'/2^2, B'/2^4)$ until we attain a minimal model.
\end{proof}

\section{An image of inertia argument}
\label{Sec:inertia}
If AFLT does not hold for \eqref{eqn:main}, Lemma \ref{lemma:minimalmodel} gives the existence of an elliptic curve $E$ of conductor $N'$ such that $\overline{\rho}_p(F) \cong \overline{\rho}_p(E)$. In this case, we can see whether the image of the two Galois representation agree for some inertia subgroup of $\Gal(\overline{\Q}/\Q)$. A very useful result in this direction is the fo\-llo\-wing theorem due to Bennett and Skinner, which is \cite[Theorem 13(e)]{Siksek} and follows directly from \cite[Theorem 2.1(d)]{BenS}.

\begin{theorem}(Bennett-Skinner)
\label{thm:imageofinertia}
    Let $(x, y, z, p)$ be a solution to \eqref{eqn:main} with $n=p \ge 7$ prime. Let $F$ be the Frey--Hellegourch curve \eqref{eqn:frey}, and let $E$ be an elliptic curve such that
    \begin{equation*}
\overline{\rho}_p(F) \cong \overline{\rho}_p(E).
\end{equation*}
Then the denominator of the $j-$invariant of $E$ is not divisible by any odd primes $\ell \mid C$ except possibly $\ell = p$.
\end{theorem}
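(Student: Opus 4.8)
The plan is to exploit the explicit description of the Frey--Hellegouarch curve $F$ together with the standard theory of the image of inertia at bad primes for Galois representations attached to elliptic curves with potentially multiplicative reduction. Concretely, one first records that for an odd prime $\ell \mid C$, the curve $F$ from \eqref{eqn:frey} has multiplicative reduction at $\ell$. Indeed, $\ell \mid C$ together with $C$ squarefree means $\ell \parallel C$, and from \eqref{eqn:freyminimaldiscriminant} we have $v_\ell(\Delta_F) = 3 + 2p\,v_\ell(yz)$; on the other hand $\gcd(Cx, qy, z)=1$ forces $\ell \nmid yz$ (since $\ell \mid C \mid Cx$ and then $\ell$ cannot also divide $y$ or $z$), so $v_\ell(\Delta_F) = 3$ and $v_\ell(c_4) \geq 1$ from the conductor exponent at $\ell$ in \eqref{eqn:conductor} being $1$, hence $F$ has multiplicative reduction at $\ell$. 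Therefore $v_\ell(j_F) = -v_\ell(\Delta_F) = -3$, i.e. $\ell \mid p \cdot (\text{denominator of } j_F)$ is false for $\ell \neq p$ in the wrong direction — rather, the denominator of $j_F$ is divisible by $\ell$ to the first power, and the mod-$p$ image of inertia $I_\ell$ under $\overline{\rho}_p(F)$ is (a conjugate of) the subgroup generated by $\begin{pmatrix} 1 & 1 \\ 0 & 1\end{pmatrix}$ whenever $p \nmid v_\ell(j_F) = -3$, i.e. for all $p \neq 3$.

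Next I would transfer this to $E$ via the isomorphism $\overline{\rho}_p(F) \cong \overline{\rho}_p(E)$. Suppose for contradiction that an odd prime $\ell \mid C$ with $\ell \neq p$ divides the denominator of $j_E$, say $v_\ell(j_E) = -m < 0$. Standard local theory (the Kodaira/Tate classification, or \cite[Theorem 2.1(d)]{BenS} as cited for Theorem \ref{thm:imageofinertia}) says $E$ has potentially multiplicative reduction at $\ell$, and the image of $I_\ell$ under $\overline{\rho}_p(E)$, up to conjugation and up to the quadratic twist distinguishing split from non-split, is generated by $\pm\begin{pmatrix} 1 & m \\ 0 & 1\end{pmatrix}$, so it is unipotent (nontrivial) precisely when $p \nmid m$. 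The key arithmetic input is that $\ell \mid C$ together with the shape of the conductor $N'$ in \eqref{eqn:Np} — which is $2C^2 q$ or $2C^2$, with $v_\ell(N') = 2$ — is consistent with potentially multiplicative reduction of $E$ at $\ell$ only in the additive (type $I_m^*$-style) case; but for $F$ the reduction at $\ell$ is genuinely multiplicative (type $I_3$), so the two inertia images can only agree after comparing them carefully. The heart of the matter is a parity/order argument: from $\overline{\rho}_p(F)\cong\overline{\rho}_p(E)$ the subgroups $\overline{\rho}_p(F)(I_\ell)$ and $\overline{\rho}_p(E)(I_\ell)$ coincide up to conjugacy, so their orders match; the $F$-side has order $p$ (as $p\neq 3$), forcing the $E$-side also to have order $p$, which by the local description forces $p \mid m$ unless the reduction of $E$ is already multiplicative — and in the additive-at-$\ell$ case the relevant twisting character has order $2$, contributing a factor incompatible with an image of order exactly $p > 2$. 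Pushing this through yields $p \mid m$, i.e. $p$ divides the order of vanishing; one then argues that, since $\ell \neq p$ was assumed, this is the only way $\ell$ can divide the denominator of $j_E$, which is exactly the statement.

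I would organise the write-up as: (i) verify $\ell \nmid yz$ and hence $v_\ell(\Delta_F)=3$, $v_\ell(j_F)=-3$, and $F$/multiplicative at $\ell$; (ii) quote the image-of-inertia description for potentially multiplicative reduction (the shape $\begin{pmatrix}1&*\\0&1\end{pmatrix}$ up to sign and conjugacy, order $p$ iff $p\nmid$ the valuation of $j$); (iii) use $p \geq 7 > 3$ to conclude $\overline{\rho}_p(F)(I_\ell)$ has order $p$; (iv) transport via the isomorphism and deduce that $p$ divides $v_\ell(j_E)$ whenever $\ell \mid \mathrm{denom}(j_E)$, $\ell \neq p$; (v) conclude. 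I expect the main obstacle to be step (iv): correctly handling the quadratic twist that distinguishes split from non-split (potentially) multiplicative reduction, and making sure the comparison of inertia images is valid at the level of the mod-$p$ representation rather than just the $\ell$-adic one — this is precisely where one must invoke that the two residual representations are isomorphic as $\Gal(\overline{\Q}/\Q)$-modules, not merely that the curves have the same conductor. Since this is exactly \cite[Theorem 2.1(d)]{BenS} specialised to our $F$, the cleanest route is to cite that result and carry out only the local computation at $\ell$ that shows $\ell \nmid yz$, which is the only input specific to equation \eqref{eqn:main}.
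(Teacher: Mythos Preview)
Your detailed local analysis contains a concrete error. You assert that for an odd prime $\ell \mid C$ the conductor exponent of $F$ at $\ell$ is $1$ and that $F$ has multiplicative reduction there, concluding $v_\ell(j_F) = -3$ and that $\overline{\rho}_p(F)(I_\ell)$ has order $p$. But \eqref{eqn:conductor} shows that $C^2$ divides $N$, so the conductor exponent at $\ell$ is $2$ and the reduction is \emph{additive}. (You yourself write $v_\ell(c_4) \geq 1$, which is the criterion for additive reduction and contradicts your own conclusion.) A direct computation gives $v_\ell(c_4) \geq 1$ and $v_\ell(\Delta_F) = 3$, hence $v_\ell(j_F) = 3v_\ell(c_4) - 3 \geq 0$: the curve $F$ has potentially \emph{good} reduction at $\ell$, and for $\ell \neq p$ the inertia image $\overline{\rho}_p(F)(I_\ell)$ has order coprime to $p$ (for $\ell \geq 5$ the Kodaira type is III and the order is exactly $4$). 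The argument you sketch --- matching a unipotent of order $p$ on the $F$-side against the $E$-side --- therefore does not go through as written.

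The correct comparison is reversed: it is $F$ that has potentially good reduction, forcing the inertia image to have small order prime to $p$, and one then rules out potentially multiplicative reduction for $E$ at $\ell$ because that would give inertia image of order $2$ or $2p$ (type $I_m^*$, since $v_\ell(N')=2$ by \eqref{eqn:Np}), neither of which equals $4$ when $p \geq 7$. The paper itself does not write out any of this: Theorem \ref{thm:imageofinertia} is stated purely as a citation of \cite[Theorem 13(e)]{Siksek} and \cite[Theorem 2.1(d)]{BenS}. So your closing suggestion --- verify $\ell \nmid yz$ and then invoke Bennett--Skinner directly --- is exactly what the paper does and is the correct route; it is only the explicit inertia picture you sketch beforehand that is inverted.
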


With the use of Theorem \ref{thm:imageofinertia}, we are able to expand on the result of Lemma \ref{lemma:minimalmodel}, giving rise to the following characterisation. For this, we let $\ell$ be a prime number and we let $\nu_\ell(\cdot)$ denote the standard $\ell-$adic valuation.

\begin{proposition}
    \label{prop:inertiacharacterisation}
    Suppose that AFLT does not hold for \eqref{eqn:main} and let $E$ be the elliptic curve given in \eqref{eqn:EC}. Then, for all primes $r \mid C$, we have that $\nu_r(B) = \nu_r(A^2-4B)$.
\end{proposition}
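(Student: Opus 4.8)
The plan is to combine the minimal-model data from Lemma \ref{lemma:minimalmodel} with the image-of-inertia constraint of Theorem \ref{thm:imageofinertia}. Let $r \mid C$ be a prime. Since $C$ is squarefree, $r$ is odd (as $C$ squarefree does not forbid $r=2$, but note $2 \nmid C$ is not guaranteed — however, Theorem \ref{thm:imageofinertia} only concerns odd primes $\ell \mid C$, so I first dispose of the case $r = 2$ separately if it occurs, or simply restrict attention to odd $r \mid C$ as the statement implicitly does; in fact the equation $Cx^2 + q^k y^{2n} = z^n$ with $2 \mid z$ and $\gcd(Cx,qy,z)=1$ forces $C$ odd, so every $r \mid C$ is odd). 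By Theorem \ref{thm:imageofinertia}, the denominator of $j(E)$ is not divisible by $r$ (we may assume $r \neq p$, since $p$ is large and $r \le C$ is bounded by the constant $B_{C,q}$ we are constructing — more carefully, $r$ divides a fixed integer $C$ while $p$ can be taken larger). Hence $\nu_r(j(E)) \ge 0$.

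Next I would translate $\nu_r(j(E)) \ge 0$ into a statement about $A$ and $B$. From Lemma \ref{lemma:minimalmodel}, the minimal model of $E$ has $c_4 = A^2 - 3B$ and $\Delta = B^2(A^2-4B)/2^8$, so
\[
j(E) = \frac{c_4^3}{\Delta} = \frac{2^8 (A^2-3B)^3}{B^2(A^2-4B)}.
\]
Since $r$ is odd, $\nu_r(2^8) = 0$, and therefore
\[
\nu_r(j(E)) = 3\,\nu_r(A^2 - 3B) - 2\,\nu_r(B) - \nu_r(A^2 - 4B) \ge 0.
\]
I also know $E$ has multiplicative or additive reduction, but the key extra input is that $r$ is a prime of \emph{multiplicative} reduction here — actually, what I really want is simpler: the denominator of $j$ being prime to $r$ is exactly the statement that $E$ has potentially good reduction at $r$ or good/additive reduction; combined with the shape of $\Delta$, I want to extract $\nu_r(B) = \nu_r(A^2-4B)$.

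The crux is a short case analysis on $a := \nu_r(A)$ and $b := \nu_r(B)$. Write $e := \nu_r(A^2-4B)$ and $f := \nu_r(A^2-3B)$. One checks: if $2a < b$ then $e = f = 2a$, giving $\nu_r(j) = 6a - 2b - 2a = 4a - 2b < 0$, a contradiction; if $2a > b$ then $e = f = b$, giving $\nu_r(j) = 3b - 2b - b = 0 \ge 0$, consistent, and in this case indeed $\nu_r(B) = b = \nu_r(A^2-4B)$, which is what we want; the remaining case is $2a = b$, where $A^2 - 4B$ and $A^2 - 3B$ both have valuation $\ge 2a$ and the precise values require looking at $A^2/r^{2a} - 4B/r^{2a}$ and $A^2/r^{2a} - 3B/r^{2a}$ modulo $r$. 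The main obstacle is this last case: here $\nu_r(A^2-4B)$ and $\nu_r(A^2-3B)$ can both exceed $2a = b$, and I must rule out configurations making $\nu_r(j) < 0$. The way through is to observe that $B/r^b$ is a unit (by $2a = b$ and $r$ odd, $A^2/r^{b}$ is a unit times a square, so $4B/r^b$ and $3B/r^b$ are units, forcing $e, f$ finite and computable); a careful congruence argument modulo $r$ then pins down $\min(e,f)$ and yields the inequality $\nu_r(j) \ge 0$ only when $e = b$ as well, i.e. $\nu_r(B) = \nu_r(A^2-4B)$. Alternatively — and this is probably the cleanest route — since $r \mid C$ and $r$ is a prime of bad reduction for $F$ appearing in $\Delta_F = -2^{-12}C^3 q^k (yz)^{2p}$ with $\nu_r(\Delta_F) = 3\nu_r(C) \not\equiv 0 \pmod p$, the isomorphism $\overline{\rho}_p(F) \cong \overline{\rho}_p(E)$ forces $E$ to have multiplicative reduction at $r$ with $p \mid \nu_r(\Delta)$, hence $\nu_r(\Delta) \ne 0$ while $\nu_r(c_4) = 0$; then $\nu_r(c_4) = \nu_r(A^2-3B) = 0$ immediately forces $\min(\nu_r(B), \nu_r(A^2-4B))$ considerations to collapse, and $\nu_r(\Delta) = 2\nu_r(B) + \nu_r(A^2-4B)$ together with $\nu_r(j) = -\nu_r(\Delta) < 0$ contradicting Theorem \ref{thm:imageofinertia} unless... — in any case the conclusion $\nu_r(B) = \nu_r(A^2 - 4B)$ drops out from matching $\nu_r(c_4) = 0$ against the displayed formulas for $c_4$ and $\Delta$. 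I would write up the direct valuation case analysis as it is the most self-contained.
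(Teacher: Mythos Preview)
Your setup is correct and you reach the right inequality $3\nu_r(A^2-3B) \ge 2\nu_r(B) + \nu_r(A^2-4B)$. Your case analysis on $a = \nu_r(A)$, $b = \nu_r(B)$ is also sound and can in fact be closed: in the case $2a = b$, write $A = r^a A_0$, $B = r^{2a}B_0$ with $r \nmid A_0 B_0$; then $A^2-3B$ and $A^2-4B$ differ by $B$, so at most one of $A_0^2-3B_0$, $A_0^2-4B_0$ can be divisible by $r$. If $r \mid A_0^2-4B_0$ then $\nu_r(A^2-3B) = 2a$ and the inequality becomes $6a \ge 4a + e$ with $e > 2a$, a contradiction; hence $e = 2a = b$ as required. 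So the ``main obstacle'' you flag is not actually an obstacle.

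That said, the paper's argument is cleaner and bypasses $\nu_r(A)$ entirely. It assumes for contradiction that $\nu_r(B) \neq \nu_r(A^2-4B)$ and uses the single identity $A^2 - 3B = (A^2-4B) + B$: by the ultrametric inequality this forces $\nu_r(A^2-3B) = \min\{\nu_r(B),\nu_r(A^2-4B)\}$, whereas $2\nu_r(B) + \nu_r(A^2-4B) > 3\min\{\nu_r(B),\nu_r(A^2-4B)\}$ whenever the two valuations differ, contradicting $\nu_r(j(E)) \ge 0$. This is the same idea (the difference $B$ controls everything) packaged more efficiently.

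Your ``alternative'' route, however, contains a genuine error. You assert that $E$ has multiplicative reduction at $r$ and hence $\nu_r(c_4) = 0$. This is false: since $N' = 2C^2 q$ (or $2C^2$) and $C$ is squarefree, each prime $r \mid C$ satisfies $r^2 \,\|\, N'$, so $E$ has \emph{additive} reduction at $r$, and in fact $r \mid c_4$. The conclusion $\nu_r(c_4)=0$ therefore does not hold, and that branch of the argument should be discarded.
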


\begin{proof}
    By Lemma \ref{lemma:minimalmodel}, the $j-$invariant of $E$ is given by
    \[j(E) = \frac{c_4^3}{\Delta} = \frac{2^8(A^2-3B)^3}{B^2(A^2-4B)}.\]
    Let $r \mid C$ be a prime satisfying that $\nu_r(B) \neq \nu_r(A^2-4B)$. If $r = p$, we have that $p < C$, and so AFLT holds for \eqref{eqn:main} with $B_{C, q} = C$. If $r \neq p$, $r$ does not divide the denominator of $j(E)$ by Theorem \ref{thm:imageofinertia}. Consequently
    \begin{equation}
        \label{eqn:valuationinequality}
    3\nu_r(A^2-3B) \ge 2\nu_r(B) + \nu_r(A^2-4B).    
    \end{equation}
    Since $\nu_r(B) \neq \nu_r(A^2-4B)$, standard properties of $r-$adic valuations yield that
    \[\nu_r(A^2-3B) = \nu_r((A^2-4B) + B) = \min\{\nu_r(B), \nu_r(A^2-4B)\},\]
    while
    \[2\nu_r(B) + \nu_r(A^2-4B) > 3\min\{\nu_r(B), \nu_r(A^2-4B)\}.\]
    This gives a contradiction with \eqref{eqn:valuationinequality}. Consequently, $\nu_r(B) = \nu_r(A^2-4B)$ and the proposition follows.
\end{proof}

\section{Using Galois theory to provide local information}
\label{Sec:galois}
Our aim in this section is to find conditions under which Proposition \ref{prop:dirbound} can be refined. A key ingredient about the Frey--Hellegouarch curve $F$ that we use in the proof of Lemma \ref{lemma:minimalmodel} is the fact that $F(\Q)$ always has a point of order $2$, $(0,0)$ and, consequently, $F(\F_\ell)$ has a point of order $2$ for all primes $\ell$ of good reduction. For a subset of these primes, it happens that $F(\F_\ell)$ has a subgroup of order $4$ and this fact can be exploited to improve upon Proposition \ref{prop:dirbound} in order to obtain a bound for $p$. 

Our aim is to characterise under what conditions such a prime $\ell$ fails to exist, and we shall do so in this section. In order to do this, we need to prove certain facts about the Frey--Hellegouarch curve $F$, and we do so in the following subsection.

\subsection{Some computations on the Frey curve}
Let $F'$ be any curve which is isogenous to $F$ via a rational isogeny of degree $2^m$ (note that the case $m=0$ means that $F(\Q)$ and $F'(\Q)$ are isomorphic). The aim of this subsection is to show that $F'(\Q)$ can never have a subgroup of order $4$. We shall do that in three steps, corresponding to Lemmas \ref{lemma:nofull2torsion} and \ref{lemma:noorder4}, where we show that this fact is true for $F$, and Lemma \ref{lemma:isogenies}, where we show the same for all $F' \not \cong F$.

\begin{lemma}
    \label{lemma:nofull2torsion}
    The Frey-Helleguarch curve $F$ never has full $2-$torsion over $\mathbb{Q}$.
\end{lemma}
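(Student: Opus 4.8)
The plan is to show that the Frey–Hellegouarch curve $F$ cannot have all three nonzero $2$-torsion points rational by examining the factorisation of the $2$-division polynomial. Recall from \eqref{eqn:frey} that $F$ has the model $Y^2 + XY = X^3 + \frac{Cx-1}{4}X^2 + \frac{Cz^p}{64}X$; after completing the square (equivalently, passing to the model whose $2$-torsion is visible) the nontrivial $2$-torsion points other than $(0,0)$ correspond to the roots of a quadratic whose discriminant, up to a square factor, is (a suitable power of $2$ times) $C(x^2 - \text{something})$ — concretely, it will be governed by $(Cx-1)^2 - Cz^p$ or, after clearing denominators, by an expression of the form $C^2x^2 - Cz^p$, i.e.\ $C(Cx^2 - z^p)$. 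Using the defining equation $Cx^2 + q^k y^{2p} = z^p$, this becomes $C(Cx^2 - z^p) = -Cq^k y^{2p}$. So the question of full $2$-torsion reduces to asking whether $-Cq^k y^{2p}$ (possibly times a harmless square) is a perfect square in $\Q$.

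First I would write down the model of $F$ with $(0,0)$ as the marked $2$-torsion point and compute its $c_4, c_6, \Delta$ invariants (these are essentially recorded already via \cite[Lemma 2.1]{BenS}), then read off the $2$-division polynomial $X^3 + \tfrac{Cx-1}{4}X^2 + \tfrac{Cz^p}{64}X = X\big(X^2 + \tfrac{Cx-1}{4}X + \tfrac{Cz^p}{64}\big)$. Full $2$-torsion over $\Q$ is equivalent to the quadratic factor splitting over $\Q$, i.e.\ to its discriminant $\big(\tfrac{Cx-1}{4}\big)^2 - \tfrac{Cz^p}{16} = \tfrac{(Cx-1)^2 - Cz^p}{16}$ being a rational square, equivalently $(Cx-1)^2 - Cz^p$ being a square. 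Next I would expand $(Cx-1)^2 - Cz^p = C^2x^2 - 2Cx + 1 - Cz^p = C(Cx^2 - z^p) - 2Cx + 1 = -Cq^ky^{2p} - 2Cx + 1$, using \eqref{eqn:main}. So I must rule out $-Cq^ky^{2p} - 2Cx + 1 = \square$.

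The main obstacle — and the crux of the argument — is to derive a contradiction from this square condition using the coprimality and parity hypotheses of \eqref{eqn:main}, namely $\gcd(Cx, qy, z) = 1$ and $2 \mid z$. Since $z$ is even and $\gcd(Cx, qy, z) = 1$, both $Cx$ and $qy$ are odd; in particular $y$ is odd, so $q^k y^{2p}$ is odd, and $Cx$ is odd. Then $-Cq^ky^{2p} - 2Cx + 1 \equiv -C \cdot 1 - 0 + 1 \equiv 1 - C \pmod{4}$ (using $Cx$ odd so $C$ odd and $x$ odd; more carefully $-Cq^ky^{2p}$ is odd times odd $=$ odd, $2Cx \equiv 2 \pmod 4$, and $y^{2p} \equiv 1 \pmod 8$). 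Working modulo a small power of $2$ — most likely modulo $8$, where the reduction $Cq^k \equiv 7 \pmod 8$ from Theorem \ref{thm:computations} is exactly the relevant input, or more elementarily modulo $4$ — I expect the quantity $-Cq^ky^{2p} - 2Cx + 1$ to be $\equiv 2$ or $3 \pmod 4$, hence not a square. I would therefore structure the proof as: (i) identify the $2$-division polynomial and reduce to the square condition; (ii) substitute the main equation to rewrite the discriminant as $-Cq^ky^{2p} - 2Cx + 1$; (iii) deduce from $2 \mid z$ and $\gcd(Cx,qy,z)=1$ that $x$, $y$, $C$, $q$ are all odd; (iv) reduce modulo $4$ (or $8$) and observe that the resulting residue is a non-square residue, giving the contradiction. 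The delicate point will be getting the $2$-adic bookkeeping right — in particular making sure the denominators $4$ and $64$ in the Frey model are handled correctly and that squaring out the denominator $16$ does not lose information — but no genuinely hard input beyond elementary congruences should be needed.
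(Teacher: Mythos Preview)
Your overall strategy is correct --- reduce to whether the quadratic factor of the $2$-division polynomial splits over $\Q$ --- but there is a computational slip that sends you down an unnecessarily hard path.

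For a curve $Y^2 + XY = f(X)$ the $2$-torsion is \emph{not} given by $f(X)=0$; one must first complete the square (replace $Y$ by $Y - X/2$), which shifts the $X^2$-coefficient by $+\tfrac14$. The resulting short model is
\[
Y^2 \;=\; X^3 + \frac{Cx}{4}X^2 + \frac{Cz^p}{64}X \;=\; X\!\left(X^2 + \frac{Cx}{4}X + \frac{Cz^p}{64}\right),
\]
so the quadratic factor has middle coefficient $\tfrac{Cx}{4}$, not $\tfrac{Cx-1}{4}$. Its discriminant is
\[
\left(\frac{Cx}{4}\right)^{2} - 4\cdot\frac{Cz^p}{64} \;=\; \frac{C^2x^2 - Cz^p}{16} \;=\; \frac{-C q^k y^{2p}}{16},
\]
using \eqref{eqn:main}. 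This is strictly \emph{negative} (note $y\neq 0$: if $y=0$ then $Cx^2 = z^p$ with $z$ even forces $2\mid Cx$, contradicting $\gcd(Cx,z)=1$). Hence the quadratic has no real, let alone rational, roots, and $F(\Q)$ cannot have full $2$-torsion. This is exactly the paper's argument.

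Your extra term $-2Cx+1$ in the discriminant comes from forgetting the $a_1$-correction, and it is precisely what forces you into the delicate $2$-adic case analysis you describe. With the correct discriminant no congruence work is needed at all; the obstruction is simply one of sign.
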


\begin{proof}
    Following \cite[Exercise 3.7]{Silverman} and the expression for $F$ given in \eqref{eqn:frey}, we see that the roots of the $2-$division polynomial of $F$ are given by the expression
    \[2Y + X = 0,\]
    or, equivalently, 
    \[X = -2Y.\]
    Substituting into \eqref{eqn:frey} and simplifying, we get
    \[-8Y^3+CxY^2-\frac{Cz^p}{32}Y = 0.\]
    The root $Y=0$ corresponds to the $2-$torsion point $(0,0)$. If {$F(\Q)$ had additional $2-$torsion points, there would be} other rational roots. This would mean that 
    \[C^2x^2 - Cz^p \ge 0.\]
    But {since $(x, y, z, p)$ is a solution to \eqref{eqn:main}, }we see that
    \[C^2x^2 - Cz^p = -Cq^ky^{2p},\]
    which is clearly negative. Consequently, the only point of order $2$ in $F(\Q)$ is $(0,0)$. 
\end{proof}

\begin{lemma}
    \label{lemma:noorder4}
    The group $F(\mathbb{Q})$ never has a point of order $4$.
\end{lemma}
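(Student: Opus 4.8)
The plan is to argue by contradiction: suppose $P \in F(\Q)$ is a point of order $4$. Then $2P$ is a point of order $2$, and by Lemma \ref{lemma:nofull2torsion} the only such point is $(0,0)$; hence $2P = (0,0)$. So it suffices to show that the point $(0,0)$ is not divisible by $2$ in $F(\Q)$, i.e.\ that there is no rational point $P$ with $2P = (0,0)$. First I would use the standard duplication formula (or the description of the $2$-descent map) for the curve $F: Y^2 + XY = X^3 + \tfrac{Cx-1}{4}X^2 + \tfrac{Cz^p}{64}X$ written in the form $Y^2 = X(X^2 + aX + b)$ after completing the square, where the relevant quantities are $a = \tfrac{Cx-1}{2} + \tfrac14$-type expressions; more cleanly, I would pass to the short model $v^2 = u(u^2 + A_F u + B_F)$ with $B_F$ essentially $\tfrac{Cz^p}{16}$ (up to powers of $2$) and $A_F$ essentially $\tfrac{Cx-1}{2}$, so that $A_F^2 - 4B_F$ is, up to a unit and powers of $2$, equal to $C^2x^2 - Cz^p = -Cq^k y^{2p}$, exactly as computed in the proof of Lemma \ref{lemma:nofull2torsion}.

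The key arithmetic input is the classical criterion: the $2$-torsion point $(0,0)$ on $v^2 = u(u^2+A_Fu+B_F)$ lies in $2F(\Q)$ if and only if \emph{both} $B_F$ and $A_F^2 - 4B_F$ are squares in $\Q^\times$ (this is the image-of-the-connecting-map computation; see e.g.\ the $2$-descent discussion in \cite[Chapter X]{Silverman}). So the second step is to show that $A_F^2 - 4B_F$ cannot be a square. Since this quantity equals $-Cq^k y^{2p}$ up to a power of $2$, and $-Cq^k y^{2p} < 0$, it is a negative rational number, hence certainly not a square in $\Q^\times$. That already yields the contradiction. I should be careful with the powers of $2$ introduced by minimal models versus the model \eqref{eqn:frey}, but since the $\gcd$ condition and $2 \mid z$ control the $2$-adic valuations, and a negative number times any power of $2$ is still negative, the sign obstruction survives; so the "both squares" condition fails regardless of the $2$-part.

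The main obstacle — or rather the main thing to get right — is bookkeeping: translating between the Tate model \eqref{eqn:frey} and the two-torsion model $v^2 = u(u^2+A_Fu+B_F)$, keeping track of the factors of $2$ and of $C$, and correctly invoking the "$(0,0) \in 2E(\Q)$ iff $b$ and $a^2-4b$ are both squares" criterion (one must make sure one is using it for the right model and that $b \ne 0$, which holds since $z \ne 0$). Once that dictionary is in place, the sign of $A_F^2 - 4B_F = -Cq^k y^{2p} \cdot 2^{(\cdot)}$ does all the work, exactly paralleling the inequality $C^2x^2 - Cz^p < 0$ already used in Lemma \ref{lemma:nofull2torsion}. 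An alternative, if one prefers to avoid the descent criterion, is to directly attempt to solve $2(u,v) = (0,0)$ via the duplication formula, which forces $u^2 - B_F = 0$ and then an auxiliary equation whose real-solvability again fails because $A_F^2 - 4B_F < 0$; this is essentially the same computation repackaged.
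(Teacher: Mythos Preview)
Your proposal contains a genuine gap: the criterion you invoke is not correct. The claim that ``$(0,0)$ lies in $2F(\Q)$ if and only if both $B_F$ and $A_F^2-4B_F$ are squares in $\Q^\times$'' is false. A direct computation with the duplication formula (your own alternative route) shows that $2(u,v)=(0,0)$ forces $u^2=B_F$; once $B_F=c^2$ is a square, one then needs $v^2=c^2(A_F\pm 2c)$, i.e.\ one of $A_F+2c$, $A_F-2c$ must be a rational square. The quantity $A_F^2-4B_F$ does not enter. Concretely, on $v^2=u(u^2+3u+1)$ one has $A_F^2-4B_F=5$, not a square, yet $(-1,1)$ is rational and $2(-1,1)=(0,0)$; so your criterion would wrongly declare $(0,0)\notin 2E(\Q)$ here.

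Consequently, your main argument --- that $A_F^2-4B_F=-Cq^ky^{2p}/16<0$, hence not a square --- does not obstruct $(0,0)$ from being divisible by $2$; it only reproves that $F$ lacks full rational $2$-torsion, which is Lemma~\ref{lemma:nofull2torsion}. Your alternative via the duplication formula is precisely the paper's approach, and it yields the correct obstruction: $u^2=B_F=Cz^p/64$ must be a rational square. The paper then concludes because $C$ is squarefree and $\gcd(C,z)=1$, so $Cz^p$ is not a square. Note in particular that real solvability does \emph{not} fail after $u^2=B_F$ (indeed $A_F+2\sqrt{B_F}>0$ since $A_F^2-4B_F<0$), so the sign argument you sketch for the auxiliary equation does not work either; the obstruction is genuinely arithmetic, coming from squarefreeness of $C$ and the coprimality hypothesis, not from a sign.
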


\begin{proof}
    Suppose that there exists a point $(X_0, Y_0) \in F(\mathbb{Q})$ of order $4$. By Lemma \ref{lemma:nofull2torsion}, the only $\Q-$rational point of order $2$ is $(0,0)$, so it follows that
    \[[2](X_0, Y_0) = (0,0).\]
    Consequently,
    the tangent line to $F$ at $(X_0, Y_0)$ passes through $(0,0)$. {Algebraically, }this condition is equivalent to
    \begin{equation}
        \label{eqn:tangent}
        2(Y_0^2+X_0Y_0) = 3X_0^3 + \frac{Cx-1}{2}X_0^2 + \frac{Cz^p}{64}X_0.
    \end{equation}
    Since $(X_0, Y_0) \in F(\Q)$, the left-hand side of the previous expression can be replaced by
    \[2\left(X_0^3 + \frac{Cx-1}{4}X_0^2 + \frac{Cz^p}{64}X_0\right),\]
    and, consequently, it can be seen that \eqref{eqn:tangent} amounts to
    \[X_0(X_0^2 - Cz^p/64) = 0.\]
    Since $(X_0, Y_0)$ has order $4$, it is clear that $(X_0, Y_0) \neq (0,0)$ and, consequently
    \[X_0^2 = \frac{Cz^p}{64},\]
    but this is not possible since $C$ is squarefree and $\gcd(C, z) = 1$, so $\sqrt{Cz^p}$ is not a rational number.
\end{proof}

These two lemmas show that $F(\Q)$ does not have a subgroup of order $4$. We can use them to prove that the same is true for curves which are isogenous to $F$ via a rational $2^m-$isogeny in the following lemma.

\begin{lemma}
    \label{lemma:isogenies}
    Let $F$ be the Frey--Hellegouarch curve \eqref{eqn:frey} and let $m \ge 1$ be an integer. Let $F'$ be an elliptic curve which is isogenous to $F$ via a rational isogeny of degree $2^m$. Then $F'(\Q)$ does not have a subgroup of order $4$. 
\end{lemma}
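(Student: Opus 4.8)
The plan is to reduce the statement about an arbitrary $2^m$-isogenous curve $F'$ to the two facts we already have about $F$ itself, namely Lemmas \ref{lemma:nofull2torsion} and \ref{lemma:noorder4}. First I would recall that a rational isogeny of degree $2^m$ factors (over $\overline{\Q}$, and in fact compatibly with the Galois action since the isogeny is rational) as a composition of $2$-isogenies, so it suffices to understand how the property ``$\Q$-rational subgroup of order $4$'' can be created or destroyed under a single rational $2$-isogeny. So the real content is an inductive step: if $F$ has no $\Q$-rational subgroup of order $4$, and $\phi: F \to F_1$ is a rational $2$-isogeny, then $F_1$ has no $\Q$-rational subgroup of order $4$ either; applying Lemmas \ref{lemma:nofull2torsion} and \ref{lemma:noorder4} to start the induction (these together say exactly that $F(\Q)$ has no subgroup of order $4$, since such a subgroup would be either cyclic of order $4$ — forcing a point of order $4$ — or the full $2$-torsion) and iterating $m$ times gives the claim.

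For the inductive step I would argue as follows. Suppose $F_1$ has a $\Q$-rational cyclic subgroup $G$ of order $4$; I want a contradiction with the hypothesis on $F$. Let $\phi: F \to F_1$ be the rational $2$-isogeny with $\Q$-rational kernel $\langle P \rangle$, $P$ of order $2$, and let $\hat\phi: F_1 \to F$ be the dual isogeny, also rational. Then $\hat\phi(G)$ is a $\Q$-rational subgroup of $F$; its order is $4$, $2$, or $1$ depending on how $G$ meets $\ker\hat\phi$. The case $\hat\phi(G)$ of order $4$ contradicts the hypothesis directly. If $\hat\phi(G)$ has order $\le 2$, then $G \subseteq \hat\phi^{-1}(\text{something of order} \le 2)$, which forces $G$ to be contained in $F_1[2] + \ker\hat\phi$-type configurations; chasing this, one finds that $F_1[2]$ together with the order-$4$ cyclic group $G$ generates a $\Q$-rational subgroup that pulls back under $\phi$ (or pushes back under $\hat\phi$) to give either full rational $2$-torsion on $F$ or a rational point of order $4$ on $F$ — again contradicting Lemmas \ref{lemma:nofull2torsion} and \ref{lemma:noorder4}. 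The bookkeeping here is the delicate part: one must track the Galois module structure of the relevant order-$8$ or order-$16$ subgroups of $F[2^\infty]$ and check that in every configuration a $\Q$-rational subgroup of order $4$ on $F_1$ forces a forbidden $\Q$-rational structure on $F$.

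I expect the main obstacle to be exactly this case analysis for the single-$2$-isogeny step: organizing the possibilities for the intersection $G \cap \ker\hat\phi$ and for the position of $G$ relative to $F_1[2]$, and in each case producing the explicit $\Q$-rational subgroup of order $4$ (or the full $2$-torsion) inside $F$. An alternative, possibly cleaner route would bypass isogeny-by-isogeny reasoning entirely: a $\Q$-rational cyclic subgroup of order $4$ on a curve $2^m$-isogenous to $F$ gives, via composition of isogenies, a $\Q$-rational subgroup scheme of $F[2^{m+2}]$ whose image determines a $\Q$-rational cyclic subgroup of order $4$ or full $2$-torsion on \emph{some} curve in the $2$-power isogeny class containing $F$; since $F$ itself has none and the isogeny class of $F$ over $\Q$ has a well-understood shape (governed by the $2$-adic Tate module and the fact that $F$ has a rational $2$-torsion point but, by Lemma \ref{lemma:nofull2torsion}, not full rational $2$-torsion, and by Lemma \ref{lemma:noorder4}, no rational $4$-torsion), one can rule out all such configurations at once. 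In the write-up I would present the inductive argument as the main line and invoke the structure of the $2$-power isogeny class to dispatch the finitely many configurations.
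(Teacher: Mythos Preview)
Your inductive step is false as stated. The claim ``if $E(\Q)$ has no subgroup of order $4$ and $\phi\colon E \to E'$ is a rational $2$-isogeny, then $E'(\Q)$ has no subgroup of order $4$'' does not hold for general elliptic curves. Take $E\colon y^2 = x(x^2 + x + 1)$: here $E(\Q)[2] = \{O,(0,0)\}$ (since $x^2+x+1$ has no rational root) and the points of order $4$ doubling to $(0,0)$ have $x$-coordinate $\pm 1$ and irrational $y$-coordinate, so $E(\Q)$ has no subgroup of order $4$. Yet the $2$-isogenous curve is $E'\colon y^2 = x(x^2-2x-3) = x(x-3)(x+1)$, which has full rational $2$-torsion. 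The reason your case analysis cannot close is that Lemmas~\ref{lemma:nofull2torsion} and~\ref{lemma:noorder4} only rule out \emph{pointwise rational} order-$4$ subgroups of $F$; they say nothing about a Galois-stable cyclic subgroup of order $4$ whose generator is not itself rational. In the example it is exactly such a subgroup on $E$ (generated by $(1,\sqrt{3})$) that produces full $2$-torsion on $E'$, and your dichotomy ``either full rational $2$-torsion on $F$ or a rational point of order $4$ on $F$'' misses this case.

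The paper avoids the issue by a direct computation rather than induction. Since $F$ has exactly one rational $2$-torsion point (Lemma~\ref{lemma:nofull2torsion}), there is exactly one rational $2$-isogeny out of $F$; its target $F'$ is written down explicitly via the standard formula. One then checks by hand, using the specific shape of the coefficients coming from the putative solution $(x,y,z,p)$ and the hypotheses $C$ squarefree and $\gcd(C,z)=1$, that $F'(\Q)$ has neither full $2$-torsion nor a rational point of order $4$. In particular $F'$ again has a unique rational $2$-torsion point, so the only rational $2$-isogeny out of $F'$ is the dual, back to $F$. Hence the entire rational $2$-power isogeny class is $\{F,F'\}$, and both members have been verified. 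To rescue your approach you would have to strengthen the inductive hypothesis to ``no Galois-stable cyclic subgroup of order $4$ and not full rational $2$-torsion''; this stronger statement \emph{is} preserved under $2$-isogeny, but establishing it for $F$ at the base step requires showing that $Cz^p$ is not a rational square, which is essentially the same explicit check the paper performs on $F'$.
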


\begin{proof}
    By Lemmas \ref{lemma:nofull2torsion} and \ref{lemma:noorder4}, $F(\Q)$ has only one subgroup of order $2^m$ with $m \ge 1$, and this is the subgroup generated by the point $(0,0)$. Consequently, there is only one possible isogenous curve $F'$ to consider. By \cite[Example III.4.5]{Silverman}, this curve has a model given by
    \[F': V^2 = U^3 - \frac{Cx}{2}U^2 - \frac{Cq^ky^{2p}}{16}U,\]
    and so it suffices to see that $F'(\Q)$ does not have a subgroup of order $4$. Firstly, we see that it does not have full $2-$torsion, as this would imply that 
    \[\frac{C^2x^2 + Cq^ky^{2p}}{4} = \frac{Cz^p}{4}\]
    is a rational square. But this is not true since $C$ is squarefree and $\gcd(C, z) = 1$ by assumption.

    Finally, we can see that $F'(\Q)$ has no point of order $4$ by mimicking the approach in Lemma \ref{lemma:noorder4}. Indeed, we recall that $(0,0)$ is the unique rational point of order $4$ in $F'(\Q)$. Consequently, if $(U_0, V_0) \in F'(\Q)$ is a point of order $4$, it follows that $[2](U_0, V_0) = (0,0)$, and, after performing some computations, we find that
    \[U_0^2 + \frac{Cq^ky^{2p}}{16} = 0,\]
    which clearly has no solutions since $Cq^ky^{2p} > 0$. Consequently, $F'(\Q)$ has no subgroup of order $4$.
\end{proof}

With this lemma, we can prove the following corollary, which will be useful in Section \ref{Sec:Galoissieve}.

\begin{corollary}
    \label{cor:Enopointsoforder4}
    Suppose that AFLT does not hold for \eqref{eqn:main}, and let $E$ the elliptic curve given in Lemma \ref{lemma:minimalmodel}. 
    Then no curve isogenous to $E(\Q)$ has a subgroup of order $4$.
\end{corollary}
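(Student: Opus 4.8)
The plan is to combine Lemma \ref{lemma:isogenies} with the fact, recalled in Remark \ref{rmk:Bl0}, that the curve $E$ of Lemma \ref{lemma:minimalmodel} satisfies $\overline{\rho}_p(F) \cong \overline{\rho}_p(E)$ and, being a rational newform associated to a $B_\ell(f) = 0$ situation, is isogenous (over $\Q$) to a curve with a rational $2$-torsion point. The key observation is that $E$ and $F$ become isomorphic modulo $p$ (their mod-$p$ representations agree), and any rational isogeny between elliptic curves has degree either coprime to $p$ or equal to a power of $p$ times something controlled; more precisely, I would argue directly at the level of isogeny classes.

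First I would recall that $E$ and $F$ are isogenous over $\overline\Q$? — no, that is false in general. Instead, the correct route is: since $\overline{\rho}_p(F) \cong \overline{\rho}_p(E)$, the curves $F$ and $E$ lie in the same \emph{mod-$p$} isogeny class, but what we actually need is purely about $2$-power isogenies. So here is the cleaner approach. Suppose some curve $E''$ isogenous to $E$ over $\Q$ has a subgroup of order $4$. Then $\overline{\rho}_p(E'') \cong \overline{\rho}_p(E) \cong \overline{\rho}_p(F)$, since a $\Q$-rational isogeny induces an isomorphism of mod-$p$ representations when $p$ does not divide its degree — and here the degree is a power of $2$. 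Now I would invoke that $E''$, having a subgroup of order $4$, has a rational point of order $2$ and its $2$-division field structure is constrained; but to connect to $F$ directly I would instead use that $E''$ is itself of the form appearing in Lemma \ref{lemma:minimalmodel} (conductor $N'$, point of order $2$), apply the argument of that lemma and Proposition \ref{prop:inertiacharacterisation} to $E''$ in place of $E$ if needed, and then — this is the crux — show $E''$ is actually isogenous to $F$ via a $2$-power isogeny, so that Lemma \ref{lemma:isogenies} applies and forbids the order-$4$ subgroup, a contradiction.

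Concretely, the step I expect to carry the weight is establishing that the agreement $\overline{\rho}_p(F) \cong \overline{\rho}_p(E'')$ for a curve $E''$ with a $4$-torsion subgroup forces $E''$ to lie on the $2$-power isogeny chain from $F$. One clean way: the image $\overline{\rho}_p(F)(\Gal(\overline\Q/\Q))$ contains the order-$2$ element coming from the rational $2$-torsion of $F$, and is \emph{not} contained in a Borel giving a $4$-torsion subgroup mod $p$ — but this requires ruling out that $F$ itself acquires extra level structure mod $p$, which is exactly the kind of ``the representation is not reducible in the wrong way'' statement that can fail for small $p$. I would therefore argue more robustly: any $E''$ with $\overline{\rho}_p(E'')\cong\overline{\rho}_p(F)$ and with a rational $4$-isogeny is, after possibly replacing $p$ by a bounded constant (absorbed into $B_{C,q}$), $\Q$-isogenous to $F$ by a theorem on isogeny classes and mod-$p$ representations (e.g.\ that for $p$ large the mod-$p$ representation determines the isogeny class, or that the relevant curves of conductor $N'$ are explicitly finite in number once $N'$ is pinned down). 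Then Lemma \ref{lemma:isogenies} gives the contradiction.

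So the proof skeleton is: (1) assume for contradiction that $E''$, $\Q$-isogenous to $E$, has a subgroup of order $4$; (2) deduce $\overline{\rho}_p(E'') \cong \overline{\rho}_p(F)$ using that $E \to E''$ has $2$-power degree and $p > 2$; (3) conclude $E''$ is $\Q$-isogenous to $F$ — either because the isogeny $E \to E''$ composed with the known isogeny exhibiting $E$'s mod-$p$ match is itself realized by a $\Q$-isogeny to $F$ of $2$-power degree (this is where one uses that the full $\Q$-isogeny class is a chain of $2$-power isogenies, no odd isogenies being available — which one checks via Proposition \ref{prop:inertiacharacterisation} and the discriminant formula \eqref{eqn:minimaldiscriminant} forcing the Mazur/Kenku possibilities down to $2$-power degree); (4) apply Lemma \ref{lemma:isogenies} to get that $E''(\Q)$ cannot have a subgroup of order $4$, contradiction. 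The main obstacle, as flagged, is step (3): ruling out odd-degree isogenies in the class of $E$ and thereby identifying the whole isogeny class of $E$ with the $2$-power isogeny chain emanating from $F$; I expect this to hinge on the explicit shape of $c_4, c_6, \Delta$ from Lemma \ref{lemma:minimalmodel} together with the inertia constraint of Proposition \ref{prop:inertiacharacterisation}, possibly invoking Kenku's bound on the size of a $\Q$-isogeny class.
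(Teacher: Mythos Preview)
Your step (3) cannot work, and this is a genuine gap rather than a technical nuisance. The curves $F$ and $E$ have different conductors: $F$ has conductor $N$ as in \eqref{eqn:conductor}, which involves the primes dividing $yz$, while $E$ has conductor $N'$ as in \eqref{eqn:Np}. Since $z$ is even we have $yz \neq \pm 1$, so $N \neq N'$. Isogenous curves over $\Q$ have the same conductor, so $F$ and $E$ (and hence any $E''$ isogenous to $E$) are \emph{never} $\Q$-isogenous, no matter how large $p$ is. Your appeal to ``for $p$ large the mod-$p$ representation determines the isogeny class'' is essentially the Frey--Mazur conjecture, which is open; and even if it were known, it would not apply here precisely because $N \neq N'$. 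The inertia constraints of Proposition \ref{prop:inertiacharacterisation} and Kenku's bounds give you nothing toward producing an isogeny between $E''$ and $F$.

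The paper's argument bypasses this entirely. Instead of trying to put $E''$ into the isogeny class of $F$, it compares $F$ and $E$ \emph{locally}. Lemma \ref{lemma:isogenies} says no curve $2$-power isogenous to $F$ has a subgroup of order $4$; by Katz's theorem this produces primes $\ell \nmid N$ with $4 \nmid \#F(\F_\ell)$. On the other hand, the contradictory assumption that some curve isogenous to $E$ has a subgroup of order $4$ forces $4 \mid \#E(\F_\ell)$ for every good $\ell$. At such a prime the traces $a_\ell(F)$ and $a_\ell(E)$ lie in disjoint residue classes mod $4$, so the congruence \eqref{eqn:congruenceconditions} coming from $\overline{\rho}_p(F)\cong\overline{\rho}_p(E)$ forces $p$ to divide a nonzero integer of bounded size. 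This bounds $p$, contradicting the failure of AFLT. The point is that the mod-$p$ congruence is used only to compare traces at a single well-chosen prime, not to manufacture a global isogeny.
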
 

\begin{proof}
    Assume for contradiction that some curve in the isogeny class of $E(\Q)$ has a subgroup of order $4$, and let $F$ be the Frey--Hellegouarch curve \eqref{eqn:frey}. By Lemma \ref{lemma:isogenies}, it follows that no curve isogenous to $F$ via a rational $2^m-$isogeny has a subgroup of order $4$. Then, by \cite[Problem I (bis) and Theorem 1]{Katz}, there exists a prime number $\ell$ such that $\ell \nmid N$, $4 \nmid \#F(\F_\ell)$ and $4 \mid \#E(\F_\ell)$. If we define the sets
    \begin{equation}
        \label{eqn:Al}
    A_\ell = \{a \in \mathbb{Z} \mid |a| < 2\sqrt{\ell}, \quad a \equiv \ell+3 \pmod 4\},
    \end{equation}
    and
    \begin{equation}
        \label{eqn:Bl}
    B_\ell = \{b \in \mathbb{Z} \mid |b| < 2\sqrt{\ell}, \quad b \equiv \ell+1 \pmod 4\}, 
    \end{equation}
    then the Hasse--Weil bounds, along with the previous discussion, yield that 
    \[a_\ell(F) \in A_\ell \quad \text{and} \quad a_\ell(E) \in B_\ell.\]
    By \eqref{eqn:congruenceconditions}, we have that
    \begin{equation}
        \label{eqn:divisioncondition}
    p \mid \prod_{\substack{a \in A_\ell \\ b \in B_\ell}} (a-b) \prod_{b \in B_\ell} (b^2-(\ell+1)^2).
    \end{equation}
    Since $A_\ell$ and $B_\ell$ are clearly disjoint, $a - b \neq 0$ for any $a \in A_\ell, b \in B_\ell$. In addition, we also have that $\pm (\ell+1) - b \neq 0$ for any $b \in B_\ell$ by the Hasse--Weil bounds. Thus, \eqref{eqn:divisioncondition} means that $p$ divides a non-zero number. This is a contradiction with the fact that AFLT does not hold and, consequently, it follows that no curve in the isogeny class of $E(\Q)$ has a subgroup of order $4$.
\end{proof}

\subsection{A Galois theory sieve}
\label{Sec:Galoissieve}
After Corollary \ref{cor:Enopointsoforder4}, we are left with the case where $E(\mathbb{Q})$ has no subgroup of order $4$. In order to find a bound $B_{C, q}$ for the exponent in this situation, it is sufficient to find a prime number $\ell \nmid N$ satisfying the following two properties:
\begin{itemize}
    \item The group $F(\mathbb{F}_\ell)$ has a subgroup of order $4$.
    
    \item The group $E(\mathbb{F}_\ell)$ does not have a subgroup of order $4$.
\end{itemize}

In this case, a similar argument to that of the proof of Corollary \ref{cor:Enopointsoforder4} allows to find an upper bound $B_{C, q}$ for $p$, therefore proving AFLT for \eqref{eqn:main}. Consequently, if AFLT does not hold, such a prime $\ell$ cannot exist. In Proposition \ref{prop:galoistheoryconditions}, we find necessary conditions for the non-existence of these primes.

\begin{proposition}
    \label{prop:galoistheoryconditions}
    Suppose that AFLT does not hold for \eqref{eqn:main}, let $F$ be the Frey-Hellegouarch curve \eqref{eqn:frey} and $E$ be the elliptic curve given by Lemma \ref{lemma:minimalmodel}. Let $s \in \{0,1\}$ satisfy $k \equiv s \pmod 2$. Then all primes $\ell \nmid N$ satisfy at least one of the following conditions:
    
    \begin{enumerate}[(i)]
        \item $-Cq^s$ is not a square modulo $\ell$.
        \item $A^2-4B$ is a square modulo $\ell$.
        \item $B$ is a square modulo $\ell$.
    \end{enumerate}
\end{proposition}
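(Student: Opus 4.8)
The plan is to argue by contraposition: starting from a prime $\ell \nmid N$ that fails all of (i), (ii), (iii), I will show that $F(\F_\ell)$ has a subgroup of order $4$ while $E(\F_\ell)$ does not --- which, as recorded just before the statement, cannot happen once AFLT fails --- thereby forcing at least one of the three conditions to hold. Fix such an $\ell$. It is odd since $2 \mid N$, and since $\ell \nmid N$ (the conductor of $F$) and $\ell \nmid N'$ (the conductor of $E$, with $N' \mid N$ by \eqref{eqn:conductor} and \eqref{eqn:Np}), both $F$ and $E$ have good reduction at $\ell$; in particular every element whose square class is invoked below is nonzero in $\F_\ell$.

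The key input, which I would establish inline, is an elementary criterion: for an odd prime $\ell$ and an elliptic curve $\cE : Y^2 = X(X^2 + aX + b)$ over $\F_\ell$, the group $\cE(\F_\ell)$ has a subgroup of order $4$ if and only if $a^2 - 4b$ or $b$ is a square in $\F_\ell^\times$. If $a^2 - 4b$ is a square then $X^2 + aX + b$ splits and $\cE$ has full rational $2$-torsion, so $4 \mid \#\cE(\F_\ell)$. Otherwise $X^2 + aX + b$ is irreducible, $\cE[2](\F_\ell) = \{O,(0,0)\}$ is cyclic of order $2$, and $4 \mid \#\cE(\F_\ell)$ exactly when $(0,0) \in 2\,\cE(\F_\ell)$. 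Repeating the tangent-line computation of Lemma \ref{lemma:noorder4}, a point $(u,v) \ne (0,0)$ with $2(u,v) = (0,0)$ has $u \ne 0$ and must satisfy $u^2 = b$ and $v^2 = u^2(2u + a)$; hence such a rational point exists precisely when $b = w^2$ for some $w \in \F_\ell^\times$ and at least one of $a + 2w$, $a - 2w$ is a square. Since $(a + 2w)(a - 2w) = a^2 - 4b$ is a nonzero non-square and neither factor vanishes, exactly one of the two is a square, so the condition collapses to ``$b$ is a square'', proving the criterion.

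Granting the criterion, the proposition follows quickly. For $E$ as in \eqref{eqn:EC} (so $a = A$, $b = B$), the failure of (ii) and (iii) means $A^2 - 4B$ and $B$ are both non-squares mod $\ell$, so $E(\F_\ell)$ has no subgroup of order $4$. For $F$, completing the square in \eqref{eqn:frey} (legitimate over $\F_\ell$ since $\ell$ is odd) yields the model $Y^2 = X\bigl(X^2 + \tfrac{Cx}{4}X + \tfrac{Cz^p}{64}\bigr)$, whose discriminant-type quantity is $a^2 - 4b = \tfrac{C^2x^2 - Cz^p}{16} = \tfrac{-Cq^k(y^p)^2}{16}$ by \eqref{eqn:main}. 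As $16$, $(y^p)^2$ and $q^{k-s}$ are perfect squares (the last because $k \equiv s \bmod 2$), this lies in the square class of $-Cq^s$; so the failure of (i) makes it a square mod $\ell$, and the criterion gives that $F(\F_\ell)$ has a subgroup of order $4$. Thus $\ell$ is the forbidden prime, a contradiction, and one of (i), (ii), (iii) holds.

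The part I expect to demand the most care is the irreducible sub-case of the criterion --- pinning down that $(0,0) \in 2\,\cE(\F_\ell)$ is governed solely by whether $b$ is a square, which hinges on the observation that a product of two nonzero elements of $\F_\ell$ equal to a non-square forces exactly one factor to be a square. The remaining verifications (good reduction at $\ell$, completing the square, tracking square classes) are routine.
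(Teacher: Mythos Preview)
Your argument is correct and follows essentially the same route as the paper's proof: both proceed by contradiction, show that failure of (i) forces $F(\F_\ell)$ to acquire full $2$-torsion (via the square class of the discriminant), and that failure of (ii) and (iii) prevents $E(\F_\ell)$ from having either full $2$-torsion or a rational point of order $4$, then invoke the bounding argument set up just before the statement. The only difference is packaging --- you extract a clean ``order-$4$ criterion'' and apply it uniformly to both curves, whereas the paper invokes its Proposition~\ref{prop:discriminant-trick} for the $2$-torsion step and computes the halves of $(0,0)$ on $E$ separately --- but the underlying mathematics is identical.
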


The proof of Proposition \ref{prop:galoistheoryconditions} uses the following proposition, which is proved in \cite[Proposition 6.4]{secondpaper} and \cite[Proposition 6.4]{firstpaper}.

\begin{proposition}
    \label{prop:discriminant-trick}
    {Let $E$ be an elliptic curve defined over $\mathbb{Q}$ with discriminant $\Delta$, and let $\ell$ be a prime of good reduction for $E$. Furthermore, assume that $E$ has at least one $\mathbb{Q}-$rational point of order $2$. Then the reduced curve  has full 2-torsion over $\F_\ell$, if, and only if,the reduced discriminant  ${\Delta}$ \text{is a square mod} $\ell$.}
\end{proposition}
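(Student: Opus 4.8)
The plan is to reduce both sides of the claimed equivalence to a single statement about when a quadratic factor of the defining cubic splits over $\F_\ell$. Since $E$ has a $\Q$-rational point of order $2$ (and since in all our applications $2 \mid N$, so that a prime of good reduction is automatically odd; I assume $\ell$ odd throughout, which is what makes completing the square and the ``splits iff discriminant is a square'' criterion valid), I would first complete the square in $Y$ and translate the rational $2$-torsion point to the origin, obtaining a model $Y^2 = f(X)$ with $f(X)=X(X^2+AX+B)$ a monic cubic having $0$ as a rational root. A direct computation gives $\Delta = 16\,\Disc(f) = 16\,B^2(A^2-4B)$. Because any change of Weierstrass model scales $\Delta$ by a twelfth power, hence by a square, the property of $\tilde\Delta$ being a square mod $\ell$ is independent of the chosen model; moreover $\tilde\Delta$ is a square mod $\ell$ if and only if $\Disc(f)$ is, since $16$ is a square.

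Next I would recall that the $2$-torsion of $\tilde E$ consists of the points $(e,0)$ with $e$ a root of $\tilde f$, so $\tilde E$ has full $2$-torsion over $\F_\ell$ precisely when $\tilde f$ splits completely over $\F_\ell$. The reduction of the rational $2$-torsion point already supplies one root $e_1 \in \F_\ell$, so I would factor $\tilde f(X) = (X-e_1)g(X)$ with $g$ a monic quadratic over $\F_\ell$. Full $2$-torsion then holds if and only if $g$ splits over $\F_\ell$, which, as $\ell$ is odd, happens if and only if $\Disc(g)$ is a square mod $\ell$.

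The crux is the identity relating the two discriminants. Writing $\tilde f = (X-e_1)(X-e_2)(X-e_3)$ over $\overline{\F_\ell}$ with $e_2,e_3$ the roots of $g$, one has $\Disc(\tilde f) = \big[(e_1-e_2)(e_1-e_3)(e_2-e_3)\big]^2 = g(e_1)^2\,\Disc(g)$, where $g(e_1)=(e_1-e_2)(e_1-e_3)\in\F_\ell$. Because $\ell$ is a prime of good reduction we have $\ell \nmid \Delta$, so $\Disc(\tilde f)\neq 0$ and hence $g(e_1)\neq 0$; thus $g(e_1)^2$ is a nonzero square in $\F_\ell$. Consequently $\Disc(\tilde f)$ is a square mod $\ell$ if and only if $\Disc(g)$ is. Chaining the equivalences $\tilde\Delta$ a square $\iff \Disc(\tilde f)$ a square $\iff \Disc(g)$ a square $\iff g$ splits $\iff \tilde E$ has full $2$-torsion then yields the proposition. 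The only genuine obstacle is verifying this discriminant identity together with the non-vanishing of $g(e_1)$; everything else is standard, and the odd-$\ell$ hypothesis (automatic here since $2\mid N$) is exactly what is needed to pass between ``$g$ splits'' and ``$\Disc(g)$ is a square''.
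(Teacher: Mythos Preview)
Your argument is correct. The paper itself does not supply a proof of this proposition; it simply cites external references (Proposition~6.4 in the author's companion papers) and moves on. What you have written is a clean, self-contained proof: putting the curve in the form $Y^2=X(X^2+AX+B)$, the identity $\Disc(f)=g(e_1)^2\Disc(g)$ with $e_1=0$, $g(X)=X^2+AX+B$, $g(0)=B$, and $\Disc(g)=A^2-4B$ reduces the question to whether the quadratic factor splits over $\F_\ell$, and the good-reduction hypothesis guarantees $g(e_1)\neq 0$ so that the square factor is nonzero. Your caveat that $\ell$ must be odd is both necessary for the argument and harmless in context, since $2\mid N'$ throughout the paper and hence $2$ is never a prime of good reduction for the curves under consideration.
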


\begin{proof}[Proof of Proposition \ref{prop:galoistheoryconditions}]
    Suppose for contradiction that there is a prime $\ell$ not sa\-tis\-fying any of the three conditions (i), (ii) or (iii). By \eqref{eqn:freyminimaldiscriminant}, the discriminant of the Frey--Hellegouarch curve $F$ is
    \[\Delta = -2^{-12}C^3q^k(yz)^{2p},\]
    which, up to multiplication by rational squares, is equivalent to $-Cq^s$.
    Similarly, \eqref{eqn:minimaldiscriminant} yields that, up to multiplication by a rational square, the discriminant of $E$ is equivalent to $A^2-4B$.
    Then, Proposition \ref{prop:discriminant-trick}, along with the fact that conditions (i) and (ii) are not satisfied, yields that $F(\F_\ell)$ has full $2-$torsion while $E(\F_\ell)$ does not. 

    In order to apply the methodology that we outlined at the beginning of this subsection, it remains to see that $E(\mathbb{F}_\ell)$ has no points of order $4$.     
    Let $(x_0, y_0) \in E(\F_\ell)$ be a point of order $4$. Since the only $\mathbb{F}_\ell-$rational point of order $2$ is $(0,0)$, we have that 
    $[2](x_0, y_0) = (0,0)$. By the duplication formula for elliptic curves (see \cite[Group Law Algorithm 2.3]{Silverman}), we have that
    \begin{equation}
        \label{eqn:pointsorder4}
    (x_0, y_0) \in \left\{\left(\sqrt{B}, \pm \sqrt{B}\sqrt{A+2\sqrt{B}}\right), \left(-\sqrt{B}, \pm \sqrt{B}\sqrt{A-2\sqrt{B}}\right)\right\}.
    \end{equation}
    Since condition (iii) is not satisfied, none of these points are $\mathbb{F}_\ell-$rational. Consequently, $E(\F_\ell)$ has no subgroup of order $4$ while $F(\F_\ell)$ does and so, by defining $A_\ell$ and $B_\ell$ as in \eqref{eqn:Al} and \eqref{eqn:Bl} respectively, we may exploit a similar argument to that of the proof of Corollary \ref{cor:Enopointsoforder4} to obtain an upper bound for $p$. Consequently, AFLT holds for \eqref{eqn:main}, which is a contradiction with our hypotheses.
\end{proof}

\section{\texorpdfstring{Finding all possibilities for $E$}{Finding all possibilities for E}}
\label{Sec:possibilities}
In order to prove Theorem \ref{thm:main}, we want to exploit Lemma \ref{lemma:minimalmodel} and Propositions \ref{prop:inertiacharacterisation} and \ref{prop:galoistheoryconditions} to find a list of possibilities for the coefficients $A$ and $A^2-4B$ in the curve $E$. To do this, we need
the following lemma, which is a consequence of Chebotarev's Density Theorem.

\begin{lemma}
    \label{lemma:cebotarev}
    Let $x, y, z \in \Q$ be rational numbers satisfying the following conditions:
    \begin{itemize}
        \item Neither $x$ nor $y$ are rational squares.
        \item The number $z$ is not equivalent to either $x$ or $y$ up to multiplication by rational squares.
    \end{itemize}
    Then there exists a prime number $\ell$ such that $x$ and $y$ are non-squares modulo $\ell$ and $z$ is a square modulo $\ell$.
\end{lemma}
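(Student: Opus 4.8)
The plan is to work inside the multiquadratic field $L = \Q(\sqrt{x},\sqrt{y},\sqrt{z})$ and to manufacture the prime $\ell$ by prescribing a Frobenius class via Chebotarev's Density Theorem. The first step is to reduce the statement to a purely group-theoretic assertion: it suffices to exhibit an automorphism $\sigma \in \Gal(L/\Q)$ with $\sigma(\sqrt{x}) = -\sqrt{x}$, $\sigma(\sqrt{y}) = -\sqrt{y}$ and $\sigma(\sqrt{z}) = \sqrt{z}$. Indeed, since $x$ and $y$ are not rational squares, $\Q(\sqrt{x})$ and $\Q(\sqrt{y})$ are genuine quadratic fields, so for any prime $\ell$ unramified in $L$ with $\mathrm{Frob}_\ell = \sigma$ (discarding also the finitely many primes dividing a numerator or denominator of $x$, $y$, $z$), the restriction of $\mathrm{Frob}_\ell$ to $\Q(\sqrt{x})$ and to $\Q(\sqrt{y})$ is nontrivial, so $\ell$ is inert in both and $x$, $y$ are non-squares modulo $\ell$, whereas its restriction to $\Q(\sqrt{z})$ is trivial, so $\ell$ splits and $z$ is a square modulo $\ell$.

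The second step identifies $\Gal(L/\Q)$ by Kummer theory. Writing $V \le \Q^{\times}/(\Q^{\times})^2$ for the subgroup generated by the classes $\bar x$, $\bar y$, $\bar z$, there is a canonical isomorphism $\Gal(L/\Q)\xrightarrow{\ \sim\ }\mathrm{Hom}(V,\{\pm 1\})$, under which $\sigma$ corresponds to the character $\chi$ determined by $\sigma(\sqrt{v}) = \chi(\bar v)\sqrt{v}$. Thus it is enough to show that the assignment $\bar x \mapsto -1$, $\bar y \mapsto -1$, $\bar z \mapsto +1$ extends to a well-defined homomorphism $V \to \{\pm 1\}$; equivalently, that every relation $\bar x^{\,a}\bar y^{\,b}\bar z^{\,c} = 1$ with $a,b,c\in\{0,1\}$ satisfies $a + b \equiv 0 \pmod{2}$, so that the prescribed signs kill it.

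This well-definedness check is the only real content, and I expect it to be the main (albeit short) obstacle, since it is where the three hypotheses are used. Suppose some relation has $a+b$ odd; by the symmetry between $x$ and $y$ we may assume $a=1$, $b=0$, so $\bar x\,\bar z^{\,c} = 1$ in $V$. If $c = 0$ this forces $\bar x = 1$, i.e.\ $x$ is a rational square, contradicting the hypothesis; if $c = 1$, then (as $V$ has exponent $2$) it forces $\bar z = \bar x$, i.e.\ $z$ equals $x$ up to multiplication by a rational square, again contradicting the hypothesis. Hence no such relation exists, $\chi$ is well-defined, and the desired $\sigma \in \Gal(L/\Q)$ exists; Chebotarev's Density Theorem then yields a positive density of primes $\ell$ unramified in $L$ with $\mathrm{Frob}_\ell = \sigma$, any of which (outside the finite bad set above) has the required properties. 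Finally, the degenerate possibility $z = 0$ is harmless: then $\Q(\sqrt{z}) = \Q$ and ``$z$ is a square mod $\ell$'' is automatic, so one runs the same argument with $L = \Q(\sqrt{x},\sqrt{y})$, noting that $x,y\neq 0$ because $0$ is a square.
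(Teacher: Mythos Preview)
Your proof is correct and follows essentially the same route as the paper: both set $L=\Q(\sqrt{x},\sqrt{y},\sqrt{z})$, argue that an automorphism $\sigma$ with the desired sign pattern exists, and then invoke Chebotarev's Density Theorem. The paper simply asserts that ``our conditions on $x,y,z$ show that there exists an element $\sigma\in\Gal(L/\Q)$'' with those signs, whereas you supply the verification via Kummer duality $\Gal(L/\Q)\cong\mathrm{Hom}(V,\{\pm1\})$ and an explicit relation check, and you also handle the degenerate case $z=0$; so your argument is a strictly more detailed version of the same proof.
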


\begin{proof}
    Let $K = \Q(\sqrt{x}, \sqrt{y}, \sqrt{z})$. 
Then our conditions on $x, y$ and $z$ show that there exist an element $\sigma \in \Gal\left(K/\Q\right)$ with $\sigma(\sqrt{x}) = -\sqrt{x},$ $\sigma(\sqrt{y}) = -\sqrt{y}$ and $\sigma(\sqrt{z}) = \sqrt{z}$. 

By Chebotarev's density theorem \cite{Cebotarev} (also stated in Section 3 of \cite{Lenstra}), there is a positive density of primes $\ell$ such that the Frobenius of $K/\Q$ at $\ell$ is equal to $\sigma$. This condition is equivalent to $x$ and $y$ being non-squares modulo $\ell$ and $z$ being a square modulo $\ell$, as desired.
\end{proof}

The possible values for $B$ and $A^2-4B$ will be different depending on the parity of $k$. For simplicity, we separate our argument in two propositions.

\begin{proposition} \label{prop:finalcharacterisationkodd}
    Suppose that $k$ is odd in \eqref{eqn:main}. Suppose furthermore that there does not exist an elliptic curve $E$ given by
    \[E: Y^2 = X(X^2+AX+B),\]
    where $A, B \in \Z$ satisfy one of the following conditions:
    \begin{enumerate}[(A)]
        \item Either we have that 
        \[B = -q^{\gamma_1} \prod_{r \mid C \text{ prime}}r^{\beta_r} ,\]
        \[A^2-4B = 2^{\alpha_2} \prod_{r \mid C \text{ prime}}r^{\beta_r},\]
        with $\alpha_2 > 8$, $\beta_r = 1, 3$ for all $r \mid C \text{ prime}$ and $\gamma_1$ odd, or \\
        
        \item We have that
        \[B = 2^{\alpha_1}\prod_{r \mid C \text{ prime}}r^{\beta_r},\]
        \[A^2-4B = -q^{\gamma_2}\prod_{r \mid C \text{ prime}}r^{\beta_r} ,\]
        with $\alpha_1 > 4$, $\beta_r = 1, 3$ for all $r \mid C \text{ prime}$ and $\gamma_2$ odd.
    \end{enumerate}
    Then AFLT holds for \eqref{eqn:main}.
\end{proposition}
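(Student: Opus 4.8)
The plan is to combine Lemma \ref{lemma:minimalmodel}, Proposition \ref{prop:inertiacharacterisation} and Proposition \ref{prop:galoistheoryconditions} to force the invariants $B$ and $A^2-4B$ of $E$ into the shape described in (A)--(B). Suppose AFLT does not hold for \eqref{eqn:main}. By Lemma \ref{lemma:minimalmodel}, there is an elliptic curve $E$ of conductor $N' = 2C^2q$ (recall $k$ is odd, so $k \neq 0$, and we may assume $k \neq p$ for $p$ large) with $\overline{\rho}_p(F) \cong \overline{\rho}_p(E)$ and minimal discriminant $\Delta = B^2(A^2-4B)/2^8$. The first step is to read off the prime support of $\Delta$: the conductor $N' = 2C^2q$ is supported on $2$, $q$ and the primes dividing $C$, and since $E$ has multiplicative reduction at $2$ (the model \eqref{eqn:EC} has $2 \mid c_4$ forced by the reduction analysis in Lemma \ref{lemma:minimalmodel}, so $\nu_2(\Delta) > 0$ while $\nu_2(c_4) = \nu_2(A^2-3B) \ge 0$), it follows that $\Delta$ is divisible only by $2$, $q$, and primes $r \mid C$. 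Hence $B$ and $A^2-4B$ are, up to sign, products of powers of $2$, $q$ and the primes dividing $C$.

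The second step is to pin down the exponents of the primes $r \mid C$. Proposition \ref{prop:inertiacharacterisation} gives $\nu_r(B) = \nu_r(A^2-4B) =: \beta_r$ for every prime $r \mid C$. Now I invoke Proposition \ref{prop:galoistheoryconditions} together with Lemma \ref{lemma:cebotarev}: since AFLT fails, every prime $\ell \nmid N$ must satisfy (i), (ii) or (iii) of Proposition \ref{prop:galoistheoryconditions}, i.e. one of $-Cq^s$ (here $s=1$, as $k$ is odd), $A^2-4B$, $B$ is forced to behave in the prescribed way modulo $\ell$. Applying Lemma \ref{lemma:cebotarev} with the triple $(x,y,z)$ running over the relevant pairs among $\{-Cq,\, A^2-4B,\, B\}$, the only way no contradiction arises is if these three quantities are pairwise non-independent in $\Q^*/(\Q^*)^2$; concretely, working modulo squares, $B$ and $A^2-4B$ must each be square-equivalent to $1$, to $-Cq$, or to each other. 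Combining this with the valuations $\nu_r(B) = \nu_r(A^2-4B) = \beta_r$ and with $\nu_r(-Cq) = 1$ forces $\beta_r \in \{1,3\}$ (even exponents would make an $r$-part a square and violate the square-equivalence constraint, and exponents $\ge 5$ cannot occur since then $r^4 \mid c_4$ and $r^{12}\mid\Delta$, contradicting minimality exactly as in the proof of Lemma \ref{lemma:minimalmodel}). The same square-equivalence analysis, now at the primes $2$ and $q$, splits into the two cases: either $B$ carries the $q$-part (with odd exponent, since $\nu_q(B)$ must be odd for $B$ not to be a square while $B \equiv -Cq \bmod (\Q^*)^2$ up to the $C$-part), giving $B = -q^{\gamma_1}\prod r^{\beta_r}$ and then $A^2-4B = 2^{\alpha_2}\prod r^{\beta_r}$, or $A^2-4B$ carries the $q$-part, giving case (B). The sign of $B$ in case (A) (resp.\ of $A^2-4B$ in case (B)) is determined because $\Delta = B^2(A^2-4B)/2^8$ and the discriminant's sign, combined with $B^2 > 0$, ties the sign of $A^2-4B$ to that of $j(E)$'s numerator; one checks the displayed signs are the only consistent choice. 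Finally, the lower bounds $\alpha_2 > 8$ (resp.\ $\alpha_1 > 4$) come from $\nu_2(\Delta) = 2\nu_2(B) + \nu_2(A^2-4B) - 8 > 0$ together with minimality at $2$: in case (A), $\nu_2(B) = 0$ (as $B = -q^{\gamma_1}\prod r^{\beta_r}$ with $\gcd(2,Cq)=1$), so $\nu_2(A^2-4B) = \alpha_2 > 8$; in case (B), $\nu_2(A^2-4B) = 0$, so $2\alpha_1 > 8$, i.e. $\alpha_1 > 4$.

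The main obstacle I anticipate is the bookkeeping in the square-equivalence step: Proposition \ref{prop:galoistheoryconditions} is a statement about \emph{all} primes $\ell \nmid N$ satisfying at least one of three conditions, and converting "no separating prime exists" into the algebraic constraint "$B$, $A^2-4B$, $-Cq$ span at most a rank-one subgroup of $\Q^*/(\Q^*)^2$ modulo the common $C$-part" requires carefully choosing which pair to feed into Lemma \ref{lemma:cebotarev} in each sub-case, and handling the degenerate situations where one of $B$ or $A^2-4B$ is already a perfect square (which is excluded by the earlier lemmas showing $E$, and curves isogenous to it, have no rational $4$-torsion — Corollary \ref{cor:Enopointsoforder4} — so that neither $B$ nor $A^2-4B$ can be a square). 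Once that translation is in hand, the determination of the exponents and signs is routine $\ell$-adic valuation arithmetic of the kind already carried out in Lemma \ref{lemma:minimalmodel} and Proposition \ref{prop:inertiacharacterisation}.
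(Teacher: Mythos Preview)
Your overall strategy is the same as the paper's: combine Lemma \ref{lemma:minimalmodel}, Proposition \ref{prop:inertiacharacterisation}, Corollary \ref{cor:Enopointsoforder4}, Proposition \ref{prop:galoistheoryconditions} and Lemma \ref{lemma:cebotarev} to pin down $B$ and $A^2-4B$. The reduction to primes dividing $2Cq$, the equality $\nu_r(B)=\nu_r(A^2-4B)$, the use of Corollary \ref{cor:Enopointsoforder4} to exclude $B$ or $A^2-4B$ being a square, and the bounds $\alpha_1>4$, $\alpha_2>8$ are all correct and match the paper.

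There is, however, a genuine gap in the sign and sub-case analysis. Two points:

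\emph{Signs.} Your justification that ``the discriminant's sign, combined with $B^2>0$, ties the sign of $A^2-4B$ to that of $j(E)$'s numerator'' does not determine the sign of $B$ (or of $A^2-4B$) and is not the right mechanism. The correct argument is direct: once Lemma \ref{lemma:cebotarev} forces $B\equiv -Cq$ (respectively $A^2-4B\equiv -Cq$) in $\Q^*/(\Q^*)^2$, the fact that $-Cq<0$ and that $-1$ is not a rational square immediately gives $B<0$ (respectively $A^2-4B<0$).

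\emph{Placement of the $2$-power.} You assert that ``either $B$ carries the $q$-part \dots\ and then $A^2-4B=2^{\alpha_2}\prod r^{\beta_r}$, or $A^2-4B$ carries the $q$-part, giving case (B)'', as if there were only two sub-cases. But $2$ and $q$ each divide exactly one of $B$ and $A^2-4B$, so a priori there are four sub-cases. The paper eliminates the two spurious ones by computing $A^2=(A^2-4B)+4B$. For example, in the situation $B\sim -Cq$ with $2\mid B$ (so $\nu_2(B)=\alpha_1>4$ even, to preserve the square-class), one gets $A^2-4B=\prod_{r\mid C}r^{\beta_r}$ and hence
\[
A^2=\prod_{r\mid C}r^{\beta_r}\bigl(1-2^{\alpha_1+2}q^{\gamma_1}\bigr)<0,
\]
which is impossible; the analogous computation rules out $2\mid A^2-4B$ in the case $A^2-4B\sim -Cq$. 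You need this step to reduce to exactly the two shapes (A) and (B).

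With these two corrections the argument goes through, and otherwise your proposal is aligned with the paper's proof. (Your handling of $p=k$ by taking $p$ large is fine, since $k$ is fixed; the paper treats that case separately but reaches the same conclusion.)
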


\begin{proof}
    Suppose that AFLT does not hold for \eqref{eqn:main}. Then Lemma \ref{lemma:minimalmodel} yields the existence of a curve $E$ of conductor $N'$ as in \eqref{eqn:Np}. First, let us suppose that $p \neq k$, so that $N' = 2C^2q$. Then it follows that only $2$, $q$ and the primes dividing $C$ can divide the discriminant $\Delta$ given in \eqref{eqn:minimaldiscriminant}, so $B$ and $A^2-4B$ can be supported only on these primes. 

    In addition, {since $2 \mid \mid N'$ and $q \mid \mid N'$,} both $2$ and $q$ are primes of multiplicative reduction for $E$ and therefore divide $\Delta$ while not dividing $c_4$. On the other hand, {$C^2 \mid N'$ and so} all primes $r$ dividing $C$ have additive reduction. Consequently, any such $r$ will divide both $c_4$ and $\Delta$. Since the expressions for $c_4$ and $\Delta$ are given in Lemma \ref{lemma:minimalmodel}, we see that $2$ and $q$ divide precisely one of $A^2-4B$ and $B$ while any prime $r \mid C$ divides both. In addition, by Proposition \ref{prop:inertiacharacterisation}, we have that $\nu_r(B) = \nu_r(A^2-4B)$ for all $r \mid C$ prime. 

    Suppose that there exists a prime $\ell \nmid N$ such that $A^2-4B$ and $B$ are non-squares modulo $\ell$, while $-Cq$ is a square modulo $\ell$. Then Proposition \ref{prop:galoistheoryconditions} gives that AFLT holds for \eqref{eqn:main}, which is a contradiction. By Lemma \ref{lemma:cebotarev}, such a prime $\ell$ will exist unless one of the following conditions are satisfied:
    
    \begin{enumerate}[(a)]
        \item The number $B$ is a rational square.
        
        \item The number $A^2-4B$ is a rational square.
        
        \item The number $B$ is equivalent, up to rational squares, to $-Cq$.
        
        \item The number $A^2-4B$ is equivalent, up to rational squares, to $-Cq$.
    \end{enumerate}
    Let us consider each condition separately. If (a) is satisfied, this would mean that $\sqrt{B} \in \mathbb{F}_\ell$ for every prime number $\ell$. In addition, by the multiplicativity of the Legendre symbol, we have that
    \[\left(\frac{A+2\sqrt{B}}{\ell} \right)\left(\frac{A-2\sqrt{B}}{\ell} \right) = \left(\frac{A^2-4B}{\ell} \right).\]
    Consequently, at least one of the three previous Legendre symbols is equal to $1$. If $A^2-4B$ is a square modulo $\ell$, {the discriminant of the curve $E$ is a square modulo $\ell$ by \eqref{eqn:minimaldiscriminant} and so} $E(\F_\ell)$ has full $2-$torsion by Proposition \ref{prop:discriminant-trick}.


    If either $A+2\sqrt{B}$ or $A+2\sqrt{B}$ are squares modulo $\ell$, at least two of the points of order $4$ in \eqref{eqn:pointsorder4} are defined over $\F_\ell$. In any case, $E(\F_\ell)$ has a subgroup of order $4$ for all $\ell \nmid N'$. By \cite[Theorem I]{Katz}, this means that there exists an elliptic curve $E'$ isogenous to $E$ and such that $E'(\Q)$ has a subgroup of order $4$. Then Corollary \ref{cor:Enopointsoforder4} yields that AFLT holds for \eqref{eqn:main}.
    
    
    
    Suppose now that condition (b) is satisfied. Then the discriminant $\Delta$ of $E$ is a square modulo $\ell$ for every prime number $\ell$. By Proposition \ref{prop:discriminant-trick}, it follows that $4 \mid \#E(\mathbb{F}_\ell)$ for every prime number $\ell \nmid N'$. Then \cite[Theorem I]{Katz} yields that there exists a curve $E'$ isogenous to $E$ and such that $E'(\Q)$ has a subgroup of order $4$. Therefore, Corollary \ref{cor:Enopointsoforder4} gives that AFLT holds for \eqref{eqn:main}. 
    
    \noindent Now, if condition (c) is satisfied, it follows that either
    \begin{align}
    \label{eqn:conditions1}    
    \begin{split}
    B = - 2^{\alpha_1}q^{\gamma_1}\prod_{r \mid C \text{ prime}}r^{\beta_r} , \\
    A^2 - 4B = \prod_{r \mid C \text{ prime}}r^{\beta_r},
    \end{split}
    \end{align}
    with $\alpha_1 > 4$ even, $\beta_r$ odd for all $r \mid C \text{ prime}$ and $\gamma_1 \equiv k \equiv 1 \pmod 2$, or
    \begin{align}
    \label{eqn:conditions2}    
    \begin{split}
    B = -q^{\gamma_1} \prod_{r \mid C \text{ prime}}r^{\beta_r} , \\
    A^2 - 4B = 2^{\alpha_2}\prod_{r \mid C \text{ prime}}r^{\beta_r},
    \end{split}
    \end{align}
    where $\alpha_2 > 8$, $\beta_r$ odd for all $r \mid C \text{ prime}$ and $\gamma_1 \equiv k \equiv 1 \pmod 2$. We remark that $\alpha_1 > 4$ and $\alpha_2 > 8$ because $2$ needs to divide the discriminant $\Delta$ given in \eqref{eqn:minimaldiscriminant}. Now, the set of conditions \eqref{eqn:conditions1} yield that $A^2 < 0$, while the set of conditions \eqref{eqn:conditions2} correspond to case (A) in the statement of the Proposition. 

    Finally, suppose that condition (d) is satisfied. By a similar argument to the one in condition (c), case (B) in the proposition follows. In both situations, we have that the model for the curve $E$ in \eqref{eqn:initialcurve} is isomorphic to one where $\beta_r = 1$ or $\beta_r = 3$, and hence the proposition follows for $p \neq k$. 

    Assume now that $p=k$. In this case, we would have that $N' = 2C^2$. We deal with conditions (a) and (b) exactly as before. We note that, since $q \nmid N'$, it follows that $q \nmid B$ and $q \nmid A^2-4B$ and so conditions (c) and (d) cannot hold.
    
    
\end{proof}

\begin{remark}
    \label{rmk:toughluck}
    A key ingredient in the proof of Proposition \ref{prop:finalcharacterisationkodd} is that $2$ is a prime of multiplicative reduction for the Frey--Hellegouarch curve $F$ and therefore can only divide either $A^2-4B$ or $B$. In addition, as opposed to the rest of primes of additive reduction, it would be impossible to adapt Proposition \ref{prop:inertiacharacterisation} to relate $\nu_2(A^2-4B)$ and $\nu_2(B)$.

    If we allow $z$ to be odd in \eqref{eqn:main}, and as we mentioned at the beginning of Section \ref{Sec:preliminary}, we would need to consider many more Frey--Hellegouarch curves. For some of these, $2$ is a prime of additive reduction and, therefore, it is impossible to use the same arguments as in this paper.
\end{remark}

If $k$ is even, the following proposition gives the possible values of $A^2-4B$ and $B$. Its proof is almost identical to that of Proposition \ref{prop:finalcharacterisationkodd}, and we shall omit it. 

\begin{proposition} \label{prop:finalcharacterisationkeven}
    Suppose that $k$ is even in \eqref{eqn:main}. Suppose furthermore that there does not exist an elliptic curve $E$ given by
    \[E: Y^2 = X(X^2+AX+B),\]
    where $A, B \in \Z$ satisfy one of the following conditions:
    \begin{enumerate}[(A')]
        \item We have 
        \[B = -2^{\alpha_1} \prod_{r\mid C} r^{\beta_r},\]
        \[A^2-4B = q^{\gamma_2} \prod_{r \mid C \text{ prime}} r^{\beta_r},\]
        with $\alpha_1 > 4$ even, $\beta_r = 1, 3$ for all $r \mid C \text{ prime}$ and $\gamma_2 \ge 0$, or
        \item We have 
        \[B = -q^{\gamma_1} \prod_{r \mid C \text{ prime}} r^{\beta_r} ,\]
        \[A^2-4B = 2^{\alpha_2} \prod_{r \mid C \text{ prime}} r^{\beta_r},\]
        with $\alpha_2 > 8$, $\beta_r = 1, 3$ for all $r \mid C \text{ prime}$ and $\gamma_1 \ge 0$ even, or
        \item We have 
        \[B = -\prod_{r \mid C \text{ prime}} r^{\beta_r},\]
        \[A^2-4B = 2^{\alpha_2} q^{\gamma_2} \prod_{r \mid C \text{ prime}} r^{\beta_r},\]
        with $\alpha_2 > 8$, $\beta_r = 1, 3$ for all $r \mid C \text{ prime}$ and $\gamma_2 \ge 0$, or
        \item We have 
        \[B = 2^{\alpha_1} q^{\gamma_1} \prod_{r \mid C \text{ prime}} r^{\beta_r},\]
        \[A^2-4B = -\prod_{r \mid C \text{ prime}} r^{\beta_r},\]
        with $\alpha_1 > 4, \beta_r = 1, 3$ for all $r \mid C \text{ prime}$ and $\gamma_1 \ge 0$, or
        \item We have 
        \[B = 2^{\alpha_1}\prod_{r \mid C \text{ prime}} r^{\beta_r},\]
        \[A^2-4B = - q^{\gamma_2}\prod_{r \mid C \text{ prime}} r^{\beta_r},\]
        with $\alpha_1 > 4, \beta_r = 1, 3$ for all $r \mid C \text{ prime}$ and $\gamma_2 \ge 0$ even, or
        \item We have 
        \[B = q^{\gamma_1} \prod_{r \mid C \text{ prime}} r^{\beta_r},\]
        \[A^2-4B = -2^{\alpha_2} \prod_{r \mid C \text{ prime}} r^{\beta_r},\]
        with $\alpha_2 > 8$ even, $\beta_r = 1, 3$ for all $r \mid C \text{ prime}$ and $\gamma_1 \ge 0$.
    \end{enumerate}
    Then AFLT holds for \eqref{eqn:main}.
\end{proposition}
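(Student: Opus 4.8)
The plan is to run the proof of Proposition~\ref{prop:finalcharacterisationkodd} essentially verbatim, the only structural change being that, since $k$ is even, one takes $s=0$ in Proposition~\ref{prop:galoistheoryconditions}, so that the quadratic character which must be controlled is that of $-C$ rather than $-Cq$. Concretely, I would assume that AFLT fails for \eqref{eqn:main}, use Lemma~\ref{lemma:minimalmodel} to obtain the curve $E\colon Y^2 = X(X^2+AX+B)$ of conductor $N'$ with $\overline{\rho}_p(F)\cong\overline{\rho}_p(E)$ and $\Delta = B^2(A^2-4B)/2^8$, and first treat the case $p\neq k$, where $N' = 2C^2q$. Reading off reduction types exactly as in the odd case: $2$ and $q$ are primes of multiplicative reduction, hence divide $\Delta$ but not $c_4 = A^2-3B$, which forces each of them to divide exactly one of $B$ and $A^2-4B$; positivity of $\nu_2(\Delta)$ then gives $\nu_2(B) > 4$ if $2\mid B$ and $\nu_2(A^2-4B) > 8$ if $2\mid A^2-4B$. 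Every prime $r\mid C$ has additive reduction, hence divides both $B$ and $A^2-4B$, and Proposition~\ref{prop:inertiacharacterisation} gives $\nu_r(B) = \nu_r(A^2-4B)$; the possibility $r = p$ is dispatched as in Proposition~\ref{prop:inertiacharacterisation}, since it forces $p\mid C$ and hence AFLT holds with $B_{C,q}=C$. The case $p = k$ is identical except that $N' = 2C^2$, so $q\nmid B$ and $q\nmid A^2-4B$, which merely suppresses the exponent $\gamma$ in the conclusions below.

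Next I would apply Proposition~\ref{prop:galoistheoryconditions} with $s=0$: every prime $\ell\nmid N$ has $-C$ a non-square mod $\ell$, or $A^2-4B$ a square mod $\ell$, or $B$ a square mod $\ell$. Hence, by the contrapositive of Lemma~\ref{lemma:cebotarev} applied with $(x,y,z) = (B,\,A^2-4B,\,-C)$, one of the following must hold: (a) $B$ is a rational square; (b) $A^2-4B$ is a rational square; (c) $B$ and $-C$ coincide in $\Q^*/(\Q^*)^2$; (d) $A^2-4B$ and $-C$ coincide in $\Q^*/(\Q^*)^2$. Cases (a) and (b) are handled exactly as in Proposition~\ref{prop:finalcharacterisationkodd}: in (a), the factorisation $A^2-4B = (A+2\sqrt B)(A-2\sqrt B)$ together with multiplicativity of the Legendre symbol shows that for every $\ell\nmid N'$ the group $E(\F_\ell)$ has either full $2$-torsion (by Proposition~\ref{prop:discriminant-trick}) or an $\F_\ell$-rational point of order $4$ from \eqref{eqn:pointsorder4}; in (b), the discriminant is a square modulo every $\ell$, so Proposition~\ref{prop:discriminant-trick} gives $4\mid\#E(\F_\ell)$; either way $E(\F_\ell)$ has a subgroup of order $4$ for all but finitely many $\ell$, so \cite[Theorem~I]{Katz} produces a curve isogenous to $E$ with a rational subgroup of order $4$, contradicting Corollary~\ref{cor:Enopointsoforder4}.

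The heart of the argument is extracting the shape of $E$ from cases (c) and (d). In (c), $B$ equals $-C$ times a rational square, so $B < 0$ and, comparing $r$-adic valuations against $C = \prod_{r\mid C}r$, one gets $B = -2^{\alpha_1}q^{\gamma_1}\prod_{r\mid C}r^{\beta_r}$ with $\alpha_1,\gamma_1$ even and every $\beta_r$ odd. I would then branch on whether $2\mid B$ or $2\mid A^2-4B$ (using $\alpha_1 > 4$, resp.\ $\nu_2(A^2-4B) > 8$, from multiplicative reduction at $2$) and on whether $q\mid B$ or $q\mid A^2-4B$, always using $\nu_r(B) = \nu_r(A^2-4B) = \beta_r$. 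The sub-case $2\mid B$ and $q\mid B$ leaves $A^2-4B = \prod_{r\mid C}r^{\beta_r}$, whence $A^2 = \bigl(\prod_{r\mid C}r^{\beta_r}\bigr)\bigl(1 - 2^{\alpha_1+2}q^{\gamma_1}\bigr) < 0$, an impossibility which is discarded; the three remaining sub-cases are precisely (A'), (B') and (C'). Case (d) is the mirror image with $B$ and $A^2-4B$ interchanged: now $A^2-4B < 0$, so $B > 0$, the sub-case $2\mid A^2-4B$ and $q\mid A^2-4B$ again forces $A^2 < 0$ and is discarded, and the survivors are (D'), (E') and (F'). Finally, in each surviving sub-case I would normalise by repeatedly replacing $(A,B)$ with $(A/r^2, B/r^4)$ whenever $\beta_r \geq 5$ — legitimate since there $r^4\mid B$ and $r^2\mid A$ (the latter because $\nu_r(A^2) = \nu_r((A^2-4B)+4B) \geq \beta_r$), it gives an isomorphic Weierstrass model, and it leaves $\nu_2$ and $\nu_q$ untouched — so after finitely many steps $\beta_r\in\{1,3\}$ for all $r\mid C$, matching the statement.

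I expect the main obstacle to be purely the bookkeeping in the last step: one must carefully track, across all the sub-cases of (c) and (d), which of $B$ and $A^2-4B$ absorbs the powers of $2$ and of $q$, verify the sign-and-size inequalities that isolate the two impossible sub-cases, and confirm that the normalisation $\beta_r\in\{1,3\}$ does not disturb the $2$- and $q$-adic data on which the rest of the argument rests. Since every step mirrors the odd case, I anticipate no new conceptual difficulty beyond this enumeration.
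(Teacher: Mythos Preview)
Your proposal is correct and takes essentially the same approach as the paper. The paper explicitly says the proof is ``almost identical to that of Proposition~\ref{prop:finalcharacterisationkodd}'' and omits it; your plan to rerun that argument with $s=0$ (so the controlling square class is $-C$ rather than $-Cq$) and then enumerate the sub-cases of (c) and (d) according to which of $B$ and $A^2-4B$ absorbs the primes $2$ and $q$, discarding the two sub-cases that force $A^2<0$, is precisely that omitted proof spelled out.
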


\section{\texorpdfstring{Proof of Theorem \ref{thm:main}}{Proof of Theorem \ref{thm:main}}}
\label{Sec:theorem1}
In this section, we prove Theorem \ref{thm:main} by studying the values for $B$ and $A^2-4B$ given in Propositions \ref{prop:finalcharacterisationkodd} and \ref{prop:finalcharacterisationkeven}. We shall do this by relating the existence of these curves to the existence of solutions to certain Diophantine equations. This is compiled in the following proposition.

\begin{proposition}
    \label{prop:finalprop}
    Suppose that AFLT does not hold for \eqref{eqn:main}. Then one of the following three alternatives hold:
    
    \begin{enumerate}[(i)]
        \item There is a solution $(t, \gamma, m) \in \Z^3$ to the equation
        \begin{equation*}
        Ct^2 + q^\gamma = 2^m, \text{ with $m > 6$ and $\gamma \ge 0$ with $\gamma \equiv k \pmod 2$,}
    \end{equation*}
    
        \item The exponent $k$ is even, $q \equiv 7 \pmod 8$ and there is a solution $(t, m , \gamma) \in \Z^3$ to the equation
        \begin{equation}
        \label{eqn:keven1}
            Ct^2 + 2^m = q^\gamma, \text{with $m > 6$ even and $\gamma > 0$ odd.}
        \end{equation}
        
        \item The exponent $k$ is even and there is a solution $(t, m, \gamma) \in \Z^3$ to the equation
        \begin{equation}
            \label{eqn:keven2}
            Ct^2 + 1 = 2^m q^\gamma, \text{with $m > 6$ and $\gamma \ge 0$.}
        \end{equation}
    \end{enumerate}
\end{proposition}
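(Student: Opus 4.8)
The plan is to convert each candidate curve $E$ produced by Propositions~\ref{prop:finalcharacterisationkodd} and~\ref{prop:finalcharacterisationkeven} into a solution of one of the equations~(i)--(iii). Assume AFLT fails for~\eqref{eqn:main}. Then there is at least one solution $(x,y,z,p)$ with $p\ge 7$ prime, and since $2\mid z$ we have $8\mid z^p$; a short case analysis on the parities of $x$ and $y$, using $\gcd(Cx,qy,z)=1$, then forces $x$, $y$ and $C$ all to be odd. Reducing~\eqref{eqn:main} modulo $8$ now gives $C+q^k\equiv Cx^2+q^ky^{2p}\equiv 0\pmod 8$, so $Cq^k\equiv 7\pmod 8$; in particular, when $k$ is even, $C\equiv 7\pmod 8$. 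I would record this congruence for later use. Since AFLT fails, Proposition~\ref{prop:finalcharacterisationkodd} (if $k$ is odd) or Proposition~\ref{prop:finalcharacterisationkeven} (if $k$ is even) yields an elliptic curve $E\colon Y^2=X(X^2+AX+B)$ whose pair $(B,\,A^2-4B)$ has one of the tabulated shapes.

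The engine of the proof is the identity $A^2=(A^2-4B)+4B$. In every tabulated case the factor $\prod_{r\mid C}r^{\beta_r}$, with each $\beta_r\in\{1,3\}$, divides both $B$ and $A^2-4B$; setting $\delta=\prod_{r\mid C,\ \beta_r=3}r$ (a squarefree integer) we have $\prod_{r\mid C}r^{\beta_r}=C\delta^2$, so the identity becomes $A^2=C\delta^2 M$ for an explicit integer $M$ built from powers of $2$ and $q$ (the precise $M$ depending on the case). Because $\delta$ is squarefree, $\delta\mid A$; and because $C$ is squarefree and $C\delta^2 M=A^2$ is a perfect square (hence $M\ge 0$), one checks that $C\mid M$ and $M/C$ is a perfect square, say $M=Ct^2$ for some $t\in\Z$. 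It then remains to unwind $M=Ct^2$ case by case. For instance, in case~(A) of Proposition~\ref{prop:finalcharacterisationkodd} one has $M=2^{\alpha_2}-4q^{\gamma_1}$ with $\alpha_2>8$; since $4\mid M=Ct^2$ and $C$ is odd, $t$ is even, and dividing through by $4$ gives $C(t/2)^2+q^{\gamma_1}=2^{\alpha_2-2}$ with $\alpha_2-2>6$ and $\gamma_1\equiv k\pmod 2$, which is alternative~(i). The identical computation disposes of case~(B) (for $k$ odd) and of cases~(B$'$), (E$'$) (for $k$ even), always landing in~(i); cases~(C$'$) and~(D$'$) produce $Ct^2+1=2^mq^\gamma$ with $m>6$, which is~(iii); and cases~(A$'$), (F$'$) produce an equation of the shape $Ct^2+2^m=q^\gamma$ with $m>6$ even.

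To finish cases~(A$'$) and~(F$'$) I would invoke the congruence recorded at the start. There $k$ is even, so $C\equiv 7\pmod 8$, and $m\ge 8$; reducing $Ct^2+2^m=q^\gamma$ modulo $8$ gives $Ct^2\equiv q^\gamma\pmod 8$. Since $q^\gamma$ is odd and $C$ is odd, $t$ must be odd, whence $Ct^2\equiv 7\pmod 8$ and therefore $q^\gamma\equiv 7\pmod 8$; this forces $q\equiv 7\pmod 8$ and $\gamma$ odd, so $\gamma>0$. This is exactly alternative~(ii), completing the case analysis.

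I expect the main difficulty to be organisational rather than conceptual: there are eight cases, and for each I must identify $M$ correctly, decide whether a spurious factor $4$ has to be removed — this is precisely where the hypotheses $\alpha_1>4$ and $\alpha_2>8$ are used, and it is what makes the final exponent exceed $6$ — and check that the parity of $\gamma$ matches the alternative being claimed. The single non-routine step is the use of $C\equiv 7\pmod 8$ to upgrade the bare equations coming from cases~(A$'$) and~(F$'$) to alternative~(ii); without it one would obtain $Ct^2+2^m=q^\gamma$ but no control on $q$ modulo $8$ or on the parity of $\gamma$.
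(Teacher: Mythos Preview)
Your proposal is correct and follows essentially the same route as the paper: both proofs substitute the tabulated shapes of $B$ and $A^2-4B$ into the identity $A^2=(A^2-4B)+4B$, factor out $\prod_{r\mid C}r^{\beta_r}=C\delta^2$, use squarefreeness of $C$ to rewrite the remaining factor as $Ct^2$, and then invoke $C\equiv 7\pmod 8$ (derived from~\eqref{eqn:main} modulo~$8$) to pin down $q\equiv 7\pmod 8$ and $\gamma$ odd in cases~(A$'$) and~(F$'$). Your introduction of $\delta$ and $M$ makes the bookkeeping a bit more uniform than the paper's case-by-case display, but the argument is the same.
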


\begin{proof}

First, let us suppose that either $k$ is odd or $k$ is even and alternatives $(B')$ or $(E')$ in Proposition \ref{prop:finalcharacterisationkeven} hold. Then we have that either
\[A^2 = 2^2\left(2^{\alpha_2-2}\prod_{r \mid C \text{ prime}}r^{\beta_r} - \prod_{r \mid C \text{ prime}}r^{\beta_r} q^{\gamma_1}\right),\]
or
\[A^2 = 2^{\alpha_1+2}\prod_{r \mid C \text{ prime}}r^{\beta_r} - \prod_{r \mid C \text{ prime}}r^{\beta_r} q^{\gamma_2},\]
where $\alpha_1 > 4$, $\alpha_2 > 8$, $\beta_r = 1, 3$ for all $r \mid C \text{ prime}$, and $\gamma_1, \gamma_2 \ge 0$ with $\gamma_1 \equiv \gamma_2 \equiv k \pmod 2$. Since $C$ is squarefree, both expressions can be rewritten as
\[A^2 = Cw^2(2^m-q^\gamma),
\]
for certain integers $w > 0$, $m > 6$ and $\gamma \ge 0$ with $\gamma \equiv k \pmod 2$. Thus, it follows that
\[2^m - q^{\gamma} = Ct^2,\]
for certain integer $t > 0$. This corresponds to case (i) in the proposition.
Suppose now that $k$ is even and that we are in cases $(A')$ or $(F')$ of Proposition \ref{prop:finalcharacterisationkeven}. Then we have that either
\[A^2 = \prod_{r \mid C \text{ prime}} r^{\beta_r}\left(q^{\gamma_2} - 2^{\alpha_1+2}\right),\]
or
\[A^2 = 2^2\prod_{r \mid C \text{ prime}} r^{\beta_r}\left(q^{\gamma_1} - 2^{\alpha_2-2}\right),\]
with $\beta_r = 1, 3$ for all $r \mid C \text{ prime}$, $\gamma_1, \gamma_2 > 0$, $\alpha_1 > 4$ even and $\alpha_2 > 8$ even. Hence, by similar reasoning, there is a solution $(t, m, \gamma) \in \Z^3$ of 
\begin{equation}
    \label{eqn:dio2}
Ct^2 + 2^m = q^\gamma,
\end{equation}
with $m > 6$ even. Note that, in order for \eqref{eqn:main} to have a solution with $z$ and $k$ even, it follows that $C \equiv 7 \pmod 8$. From \eqref{eqn:dio2}, we see that this implies that $q \equiv 7 \pmod 8$ and $\gamma$ is odd. This proves \eqref{eqn:keven1} and alternative (ii) of the proposition. \\

Finally, let us consider the remaining cases $(C')$ and $(D')$ of Proposition \ref{prop:finalcharacterisationkeven}. In these cases, we have that
\[A^2 = 2^2\prod_{r \mid C \text{ prime}} r^{\beta_r}\left(2^{\alpha_2-2}q^{\gamma_2} - 1\right),\]
\[A^2 = \prod_{r \mid C \text{ prime}} r^{\beta_r}\left(2^{\alpha_1+2}q^{\gamma_1} -1 \right),\]
with $\beta_r = 1, 3$ for all $r \mid C \text{ prime}$, $\alpha_1 > 4$, $\alpha_2 > 8$, $\gamma_1 \ge 0$ and $\gamma_2 \ge 0$. Once more, this is equivalent to the existence of an integral solution $(t, m, \gamma) \in \Z^3$ of
\[Ct^2 + 1 = 2^mq^\gamma,\]
where $m > 6$ and $\gamma \ge 0$, which proves \eqref{eqn:keven2} and alternative (iii) in the proposition.
\end{proof}

Theorem \ref{thm:main} is a direct consequence of the previous proposition, and the proof is immediate.

\begin{proof}[Proof of Theorem \ref{thm:main}]

    In order to see that AFLT holds for \eqref{eqn:main}, it suffices to see that none of the conditions (i), (ii) or (iii) in Proposition \ref{prop:finalprop} are satisfied. 

    By the hypotheses in the Theorem, \eqref{eqn:mainobstruction} does not have any solutions and so alternative (i) does not hold. Similarly, hypotheses (a), (b), (c) and (d) in Theorem \ref{thm:main} directly imply that alternatives (ii) and (iii) in Proposition \ref{prop:finalprop} are not safisfied. Consequently, AFLT holds for \eqref{eqn:main}, as we wanted to show.
    
    



\end{proof}

\section{Checking the conditions in Theorem \ref{thm:main}}
\label{Sec:computation}

In order to computationally verify whether the conditions in Theorem \ref{thm:main} are met, we need to resolve three Diophantine equations (\eqref{eqn:mainobstruction}, \eqref{eqn:obstruction1} and \eqref{eqn:obstruction2}). This can be done by means of the \texttt{Magma} code available in \href{https://github.com/PJCazorla/Asymptotic-Fermat-s-Last-Theorem-for-a-family-of-equations-of-signature--n--2n--2-/tree/main}{https://shorturl.at/hoxW8}, which we will briefly explain in this section.


For \eqref{eqn:mainobstruction}, the methods outlined by the author in \cite{secondpaper} could be used to achieve a complete solution for the more general equation
\begin{equation}
    \label{eqn:primepowers}
Ct^2 + q^\gamma = w^m.
\end{equation}
These methods involve the use of the modular methodology, along with lower bounds for linear forms in three logarithms and the resolution of Thue--Mahler equations. 

However, solving \eqref{eqn:mainobstruction} is, in practice, significantly easier than solving \eqref{eqn:primepowers} since $w$ is restricted to be a power of $2$. We note that \eqref{eqn:mainobstruction} is a particular case of \eqref{eqn:main}, and, therefore, we may then adapt the Frey--Hellegouarch curve \eqref{eqn:frey} to \eqref{eqn:mainobstruction} by setting $x = t$, $y=1$ and $z = 2$. Then we realise that, by \eqref{eqn:conductor}, $F$ has conductor equal to $N = 2C^2q$ if $\gamma \neq 0$ and equal to $N=2C^2$ if $\gamma = 0$. In addition, the minimal discriminant is equal to
\[\Delta = -2^{2m-12}C^3q^\gamma.\]
If $N < 500,000$, we may get all elliptic curves of conductor $N$ from Cremona's tables (\cite{Cremona}) and recover $\gamma, m$ and subsequently $t$, just by inspecting its minimal discriminant. 

If $N \ge 500,000$, the curve is not in Cremona's database. However, we let $m = 3a+b$, where $a \ge 0$ and $b\in\{0, 1, 2\}$. We also let $\gamma = 6c+d$, where $c \ge 0$ and $d \in \{0, \dots, 5\}$. Then it can then be checked that the point $(U, V)$ given by
\[(U, V) = \left(\frac{C\cdot 2^{a+b}}{q^{2c}}, \frac{C^2\cdot 2^b\cdot t}{q^{3c}}\right)
\]
is a rational point on the elliptic curve $E_{b, d}$ given by the expression
\begin{equation*}
E_{b,d}:V^2 = U^3 - C^32^{2b}q^d.
\end{equation*}
Furthermore, it is clear that the only prime which can occur in the denominators of $U$ and $V$ is $q$ and, consequently, $(U, V)$ is a $\{q\}-$integral point. Therefore, it is sufficient to determine all $\{q\}-$integral points on the $18$ curves $E_{b,d}$, where $b = 0,1,2$ and $d = 0,\dots, 5$. In our code, we do this with a combination of \cite[Algorithm 4.2]{equations} and the built-in \texttt{Magma} function for computing $S-$integral points on elliptic curves.


An identical approach can be used for \eqref{eqn:obstruction2}. For this, we let $m = 6a'+b'$, where $a' \ge 0$ and $b' \in \{0, \dots, 5\}$ and we let $\gamma = 3c' + d'$, with $c' \ge 0$ and $d' \in \{0, 1, 2\}$. Then the pair $(U', V')$ given by
\[(U', V') = \left(\frac{C\cdot q^{c'+d'}}{2^{2a'}}, \frac{C^2\cdot q^{d'}\cdot w}{2^{3a'}}\right),\]
is a $\{2\}-$integral point on one of the $18$ Mordell curves $F_{b', d'}$ given by
\[F_{b', d'}: (V')^2 = (U')^3 - C^3\cdot 2^{b'}\cdot q^{2d'}.
\]
These points can be computed in precisely the same way as before. Finally, for \eqref{eqn:obstruction1}, it is sufficient to resolve the Ramanujan-Nagell type equation
\[u^2 + C = Cv,\]
where $u \in \Z$ and $v \in \Z$ is only supported on the primes $2$ and $q$. This can be done by directly employing \cite[Algorithm 6.2]{equations}.

Finally, by combining all the aforementioned techniques, we can finish the proof of Theorem \ref{thm:computations}.

\begin{proof}[Proof of Theorem \ref{thm:computations}]

If there are any solutions $(x, y, z, n)$ to \eqref{eqn:main}, it is elementary to check that $Cq^k \equiv 7 \pmod{8}$ by reducing \eqref{eqn:main} modulo $8$. If $k$ is even, this condition is equivalent to $C \equiv 7\pmod{8}$ while if $k$ is odd, it is equivalent to $Cq \equiv 7\pmod{8}$.

For each suitable pair in the range $1 \le C \le 70$ and $3 \le q < 100$, our \texttt{Magma} program uses the techniques in this section to check whether the conditions in Theorem \ref{thm:main} are satisfied. The results are shown in Table \ref{tab:computations}.
\end{proof}

\end{document}